\title{Bergman iteration and $C^{\infty}$-convergence
towards K\"ahler-Ricci flow}
\author{Ryosuke Takahashi}
\address{Mathematical Institute, Tohoku University, 6-3, Aoba, Aramaki, Aoba-ku, Sendai, 980-8578, Japan}
\email{ryosuke.takahashi.a7@tohoku.ac.jp}
\keywords{K{\"a}hler-Ricci flow, K\"ahler-Einstein metric, Bergman iteration}
\thanks{This work was supported by Grant-in-Aid for JSPS Fellows Number 16J01211.}
\subjclass[2010]{53C25}
\theoremstyle{definition}
\newtheorem{definition}{Definition}[section]
\newtheorem*{acknowledgements}{Acknowledgements}
\newtheorem*{claim}{Claim}
\theoremstyle{plain}
\newtheorem{theorem}{Theorem}[section]
\newtheorem{proposition}{Proposition}[section]
\newtheorem{lemma}{Lemma}[section]
\theoremstyle{remark}
\newtheorem{remark}{Remark}[section]
\begin{document}
%=========Abstract===================================================
\begin{abstract}
On a polarized manifold $(X,L)$, the Bergman iteration $\phi_k^{(m)}$ is defined as a sequence of Bergman metrics on $L$ with two integer parameters $k, m$. We study the relation between the K\"ahler-Ricci flow $\phi_t$ at any time $t \geq 0$ and the limiting behavior of metrics $\phi_k^{(m)}$ when $m=m(k)$ and the ratio $m/k$ approaches to $t$ as $k \to \infty$. Mainly, three settings are investigated: the case when $L$ is a general polarization on a Calabi-Yau manifold $X$ and the case when $L=\pm K_X$ is the (anti-) canonical bundle. 
Recently, Berman showed that the convergence $\phi_k^{(m)} \to \phi_t$ holds in the $C^0$-topology, in particular, the convergence of curvatures holds in terms of currents. In this paper, we extend Berman's result and show that this convergence actually holds in the smooth topology.
\end{abstract}
\maketitle
%=========Index======================================================
\tableofcontents
%=========Section 1===================================================
\section{Introduction}
\label{sec1}
\subsection{Background}
Throughout this paper, let $(X,L)$ be an $n$-dimensional polarized manifold (i.e., $X$ is a compact K\"ahler manifold with an ample line bundle $L$), and ${\mathcal H}(L)$ is the space of smooth plurisubharmonic weights with strictly positive curvature (where the term ``weight'' is an additive notation for hermitian metrics, for instance, see \cite{BB10}). For $\phi \in {\mathcal H}(L)$, $\omega_{\phi}$ denotes the curvature locally represented as
\[
\omega_{\phi}=\frac{\sqrt{-1}}{2 \pi} \partial \bar{\partial} \phi \in c_1(L).
\]
For simplicity, we may assume that $c_1(L)^n = n!$, i.e., the Monge-Amp\`ere volume form
\[
{\rm MA}(\phi):=\frac{\omega_{\phi}^n}{n!}
\]
has unit volume. Let $\mu$ be a map from ${\mathcal H}(L)$ to the space of all smooth volume forms on $X$. We assume that $\mu=\mu(\phi)$ depends smoothly on $\phi$. Given an initial metric $\phi_0 \in {\mathcal H}(L)$, {\it K\"ahler-Ricci flow} in ${\mathcal H}(L)$ is defined by the Monge-Amp\`ere evolution equation
\begin{equation} \label{krf}
\frac{\partial{\phi_t}}{\partial t} = \log \frac{{\rm MA}(\phi_t)}{\mu(\phi_t)}.
\end{equation}
The K\"ahler-Ricci flow is an analytic study object in nature, whereas Berman \cite{Ber13} proposed a numerical algorithm to study \eqref{krf} called ``Bergman iteration'': for any integer $k$, let ${\mathcal B}_k$ be the space of hermitian forms on $H^0(X,kL)$. We associate the pair $(\phi, \mu(\phi))$ to a {\it Hilbert map}:
\[
{\rm Hilb}_{k, \mu}(\phi) \colon {\mathcal H}(L) \longrightarrow {\mathcal B}_k
\]
defined by
\[
\|s\|_{{\rm Hilb}_{k, \mu}(\phi)} ^2 = \int_X |s|^2 e^{-k \phi} \mu(\phi)
\]
for $s \in H^0(X,kL)$. Conversely, {\it Fubini-Study map}:
\[
{\rm FS}_k \colon {\mathcal B}_k \longrightarrow {\mathcal H}(L)
\]
is defined by
\[
{\rm FS}_k (H)=\frac{1}{k} \log \left( \frac{1}{N_k} \sum_{i=1}^{N_k} |s_i|^2 \right)
\]
for $H \in {\mathcal B}_k$, where $(s_i)$ is {\it any} orthonormal basis with respect to $H$ and
\[
N_k:= \dim H^0(X, kL).
\]
Notice that the definition of ${\rm FS}_k (H)$ does not depend on the specific choice of $(s_i)$. In fact, ${\rm FS}_k (H)$ is, to the letter, just the restriction to $X$ of the Fubini-Study weight determined by $H$.
An element in ${\mathcal B}_k$, or in the image of injective map ${\rm FS}_k$ is called a {\it Bergman metric} (at level $k$). 
Let ${\mathcal T}_{k, \mu}$ be the composition of these two maps
\[
{\mathcal T}_{k, \mu}:={\rm FS}_k \circ {\rm Hilb}_{k, \mu}.
\]
We consider the sequence of Bergman metrics:
\begin{equation} \label{bit}
\phi_k^{(m)}:=({\mathcal T}_{k,\mu})^m (\phi_0).
\end{equation}
The sequence $\phi_k^{(m)}$ is called the {\it Bergman iteration} (starting at $\phi_0$). We mainly consider three cases $({\bf S}_0)$ and $({\bf S}_{\pm})$. Then the stational points $\phi_{\rm KE}$ of the K\"ahler-Ricci flow, i.e., the solutions to the Monge-Amp\`ere equation
\begin{equation} \label{mae}
{\rm MA}(\phi_{\rm KE}) = \mu(\phi_{\rm KE})
\end{equation}
is a {\it K\"ahler-Einstein weight} as we see below.

\noindent
{\it The Calabi-Yau setting } $({\bf S}_0)$.
Let $X$ be a Calabi-Yau manifold (i.e., the canonical bundle $K_X$ is holomorphically trivial) and $\Omega$ a global holomorphic $n$-form that vanishes nowhere. Then
\begin{equation} \label{muz}
\mu_0:=C (\sqrt{-1})^{n^2} \Omega \wedge \bar{\Omega}
\end{equation}
defines a smooth volume form, where $C$ is a normalizing constant so that $\mu_0$ has unit volume. We set $\mu:=\mu_0$. Since $\Omega$ is holomorphic, differentiating \eqref{mae} yields
\[
{\rm Ric} (\omega_{\phi_{\rm KE}}) = - \frac{\sqrt{-1}}{2 \pi} \partial \bar{\partial} \log {\rm MA}(\phi_{\rm KE})
= - \frac{\sqrt{-1}}{2 \pi} \partial \bar{\partial} \log \mu_0
= 0.
\]
Thus the solution corresponds to a Ricci-flat K\"ahler metric. Berman \cite[Theorem 3.1]{Ber13} (also see \cite{Cao85}) showed that there exists a long time solution $\phi_t$ of \eqref{krf}, and $\phi_t$ converges to the K\"ahler-Einstein weight $\phi_{\rm KE}$ in the $C^{\infty}$-topology as $t \to \infty$.

\vspace{4mm}

\noindent
{\it The (anti-) canonical setting } $({\bf S}_{\pm})$. We consider the case when the ample line bundle $L$ is the (anti-) canonical bundle $\pm K_X$ and $\mu$ is the canonical volume form
\[
\mu_{\pm}(\phi):=e^{\pm \phi}.
\]
Then differentiating the equation \eqref{mae}, we have
\[
{\rm Ric}(\omega_{\phi_{\rm KE}})= - \frac{\sqrt{-1}}{2 \pi} \partial \bar{\partial} \log {\rm MA}(\phi_{\rm KE})
= - \frac{\sqrt{-1}}{2 \pi} \partial \bar{\partial} \log \mu_{\pm}(\phi_{\rm KE})= \mp \omega_{\phi_{\rm KE}}.
\]
Thus the solution corresponds to a K\"ahler-Einstein metric of negative (resp. positive) scalar curvature. In the both settings, the long time solution of \eqref{krf} always exists. Moreover, if we consider the normalized canonical volume form
\[
\bar{\mu}_{\pm}(\phi):=\frac{\mu_{\pm}(\phi)}{\int_X \mu_{\pm}(\phi)}
\]
instead of $\mu_{\pm}$, the normalized K\"ahler-Ricci flow converges to a K\"ahler-Einstein weight in the setting $L=K_X$. On the other hand, we need some extra assumptions to prove the same result in the setting $L=-K_X$ (see \cite[Theorem 4.1]{Ber13} and \cite{TZ07}).

\vspace{4mm}

Let $m=m(k)$ be a sequence of non-negative integers such that the ratio $m/k$ converges to some real number $t \geq 0$ as $k \to \infty$. We would like to call such a limit the {\it double scaling limit} and simply write as $m/k \to t$.
In the seminal paper of Berman \cite{Ber13}, he showed that in each of three settings $({\bf S}_0)$ and $({\bf S}_{\pm})$, in the double scaling limit $m/k \to t$, the Bergman iteration $\phi_k^{(m)}$ converges to the K\"ahler-Ricci flow $\phi_t$ in the $C^0$-topology. In particular, the convergence of K\"ahler metrics $\omega_{\phi_k^{(m)}} \to \omega_{\phi_t}$ holds in terms of current.
\subsection{The main result}
As expected in \cite{Ber13}, the statement of Berman's theorem still holds in a stronger sense, that is, we can show the following:
\begin{theorem} \label{thm}
Let $(X,L)$ be a polarized manifold. For any $\phi_0 \in {\mathcal H}(L)$, let $\phi_k^{(m)}$ be the Bergman iteration \eqref{bit} and $\phi_t$ the K\"ahler-Ricci flow \eqref{krf} starting at the same initial weight $\phi_0$. Then, in each of three settings $({\bf S}_0)$, $({\bf S}_{\pm})$, in the double scaling limit $m/k \to t$, we have the convergence of K\"ahler metrics
\[
\omega_{\phi_k^{(m)}} \to \omega_{\phi_t}
\]
in the $C^{\infty}$-topology on $X$. More precisely, for any non-negative integer $l$, we have
\[
\left\|\omega_{\phi_k^{(m)}} - \omega_{\phi_t}\right\|_{C^l}=O(k^{-1}).
\]
\end{theorem}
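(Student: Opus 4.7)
The plan is to realize one Bergman iteration as an $O(k^{-2})$-perturbation of one forward Euler step for the K\"ahler-Ricci flow \eqref{krf} with step size $1/k$, and then to accumulate the per-step error over $m=O(k)$ steps by a discrete Gronwall argument in every $C^l$ norm.

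\textbf{Step 1 --- one iteration via the Bergman kernel.} Let $(s_i)$ be an ${\rm Hilb}_{k,\mu}(\phi)$-orthonormal basis of $H^0(X,kL)$ and $\rho_k(\phi):=\sum_i |s_i|^2 e^{-k\phi}$ the associated Bergman kernel function. Then $\mathcal{T}_{k,\mu}(\phi)-\phi=\tfrac{1}{k}\log(\rho_k(\phi)/N_k)$. The Tian-Yau-Zelditch-Catlin expansion in its smooth Ma-Marinescu form, applied with the $\phi$-dependent measure $\mu(\phi)$, reads
\[
\Big\|\rho_k(\phi)-k^n\,\tfrac{{\rm MA}(\phi)}{\mu(\phi)}\bigl(1+a_1(\phi)k^{-1}+\cdots+a_N(\phi)k^{-N}\bigr)\Big\|_{C^l}\leq C_{l,N}k^{n-N-1},
\]
uniformly on subsets of ${\mathcal H}(L)$ bounded in a sufficiently strong $C^{l'}$-topology. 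Combining with $N_k=k^n+O(k^{n-1})$ (Riemann-Roch, together with $c_1(L)^n=n!$) produces the one-step identity
\[
\mathcal{T}_{k,\mu}(\phi)-\phi=\tfrac{1}{k}\log\tfrac{{\rm MA}(\phi)}{\mu(\phi)}+\tfrac{1}{k^2}R_k(\phi),\qquad \|R_k(\phi)\|_{C^l}\leq M_l,
\]
so one Bergman step is precisely one forward Euler step of \eqref{krf} modulo a consistency error of order $k^{-2}$ in every $C^l$.

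\textbf{Step 2 --- comparison with the continuous flow.} Set $\psi_k^{(m)}:=\phi_{m/k}$. On any interval $[0,T]$ on which the K\"ahler-Ricci flow is smooth --- $[0,\infty)$ in $({\bf S}_0)$ and $({\bf S}_+)$, and any finite interval in $({\bf S}_-)$ --- Taylor expansion of $t\mapsto \phi_t$ in \eqref{krf} gives
\[
\psi_k^{(m+1)}-\psi_k^{(m)}=\tfrac{1}{k}\log\tfrac{{\rm MA}(\psi_k^{(m)})}{\mu(\psi_k^{(m)})}+O_{C^l}(k^{-2}).
\]
Subtracting from Step~1 and using that $\phi\mapsto\log({\rm MA}(\phi)/\mu(\phi))$ is $C^l$-Lipschitz on $C^{l'}$-bounded sets, the error $\eta_m:=\phi_k^{(m)}-\psi_k^{(m)}$ obeys the discrete Gronwall recursion
\[
\|\eta_{m+1}\|_{C^l}\leq (1+Ck^{-1})\|\eta_m\|_{C^l}+C'k^{-2}.
\]
Iterating $Tk$ times from $\eta_0=0$ yields $\|\eta_m\|_{C^l}=O(k^{-1})$, and applying $\tfrac{\sqrt{-1}}{2\pi}\partial\bar\partial$ delivers the advertised rate on the curvatures.

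\textbf{Main obstacle.} Both the TYZ constants $C_{l,N}$ of Step~1 and the Lipschitz constant $C$ of Step~2 depend on the high-order norm of the iterate $\phi_k^{(m)}$, so the entire argument requires a uniform a priori bound $\|\phi_k^{(m)}\|_{C^{l'}}\leq A_l$ for all $m\leq Tk$ and all large $k$. Berman's $C^0$-convergence and the smoothness of $\phi_t$ provide such a bound for the target $\psi_k^{(m)}$, so the real task is a simultaneous induction in $m$ that controls $\|\eta_m\|_{C^l}$ and $\|\phi_k^{(m)}\|_{C^{l'}}$ at once. Closing this loop --- ensuring that the Bergman iterates remain in a fixed $C^{l'}$-tube around $\phi_t$ throughout $m\leq Tk$, despite $m$ being comparable to $k$ --- is the technically delicate point, and is likely to require Monge-Amp\`ere-type regularity for the nonlinear Bergman map ${\mathcal T}_{k,\mu}$ in order to upgrade Berman's $C^0$-control into higher-order control uniformly in $k$ and $m$.
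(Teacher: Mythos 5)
Your Steps 1--2 set up the right one-step consistency statement, but the proposed recursion does not close, for two reasons, one of which you flag and one of which you do not. The unflagged one: the map $\phi\mapsto\log({\rm MA}(\phi)/\mu(\phi))$ is not $C^l$-Lipschitz on $C^l$-bounded sets --- its linearization is $\Delta_\phi$ plus lower order, which loses two derivatives --- so the recursion $\|\eta_{m+1}\|_{C^l}\le(1+Ck^{-1})\|\eta_m\|_{C^l}+C'k^{-2}$ does not even formally close in a fixed $C^l$ norm; you need $\|\eta_m\|_{C^{l+2}}$ on the right. The flagged one is also fatal: the constants in both the Bergman expansion and the Lipschitz estimate depend on high-order norms of the iterates $\phi_k^{(m)}$, and no a priori bound on those is available. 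Your remark that closing this would require Monge--Amp\`ere-type regularity for $\mathcal T_{k,\mu}$ is exactly what the paper is built to avoid.

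The paper runs the Gronwall accumulation \emph{only in the sup-norm}, where Berman's monotonicity (Proposition~\ref{mit}) gives an exact inequality $d(\mathcal T_{k,\mu}\phi,\mathcal T_{k,\mu}\psi)\le(1\mp k^{-1})d(\phi,\psi)$ (or $\le d(\phi,\psi)$ in $({\bf S}_0)$), with no derivative loss and no constants depending on the iterates. To make that $C^0$ bound strong enough, it perturbs the continuous flow to $\widetilde\phi_t^{(r)}=\phi_t+\sum_{i=1}^r k^{-i}\eta_i(t)$, choosing the $\eta_i$ as solutions of linear inhomogeneous heat equations that cancel the Bergman-expansion coefficients order by order; this drives the per-step consistency error down to $O(k^{-(r+2)})$ and hence, after $m\lesssim Tk$ steps, $d(\phi_k^{(m)},\widetilde\phi_{m/k}^{(r)})=O(k^{-(r+1)})$ (Lemma~\ref{hoa}). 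The passage from $C^0$ to $C^l$ is then done \emph{not} by differentiating the iteration, but by the Donaldson--Fine projective machinery: the $C^0$-distance is converted into the Bergman-space distance $d_k$ at the cost of a factor $k^{2n}$ (Lemma~\ref{bdf}); the center-of-mass operator norm is controlled (Lemmas~\ref{bop}, \ref{bon}); and $R$-bounded geometry together with Lemma~\ref{ccp} yields the $C^l$ estimate on curvatures. Since that upgrade loses only polynomially many powers of $k$, taking $r>\frac l2+2n$ wins and gives the stated $O(k^{-1})$ rate. In short, the paper trades your per-step $C^l$ estimate (circular, and suffering a loss of derivatives) for a per-step $C^0$ estimate of arbitrarily high order obtained from a carefully chosen ansatz, and then recovers all derivatives at once from the Bergman-metric geometry --- a genuinely different, and here indispensable, mechanism that your proposal is missing.
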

\begin{remark}
In the proof of Theorem \ref{thm} for the setting $({\bf S}_0)$, we do not use the Calabi-Yau hypothesis and the equation \eqref{muz}. Therefore, Theorem \ref{thm} can be extended to the case when $(X,L)$ is a general polarized manifold and $\mu$ is any fixed smooth volume form with unit volume (where the long time solution and convergence of the K\"ahler-Ricci flow for this case were obtained also in \cite[Theorem 3.1]{Ber13} or \cite{Cao85}).

\end{remark}
Our problem is similar to the problem discussed in many places (e.g. \cite{Don02}, \cite{Fin10}, \cite{Has15}, etc.). The key idea to prove the main theorem is constructing the higher order approximation towards the Bergman iteration by adding polynomials of $k^{-1}$ with coefficient functions $\eta_i(t)$ to $\phi_t$:
\[
\widetilde{\phi}_t^{(r)}:= \phi_t + \sum_{i=1}^{r} k^{-i} \eta_i (t).
\]
We can find a successful choice of $\eta_i$ as a solution of a heat equation and kill the lower order terms appearing in the distance $d \left( \phi_k^{(m)},\widetilde{\phi}_{m/k}^{(r)} \right)= \sup_X \left| \phi_k^{(m)} - \widetilde{\phi}_{m/k}^{(r)} \right|$, which overcomes the polynomial growth of the distance functions on ${\mathcal B}_k$ (cf. Lemma \ref{bdf}). The $C^l$-norms for Bergman metrics are controlled by the upper bound of the operator norm and the distance function provided the family of metrics has bounded geometry (cf. Lemma \ref{ccp}). These projective and analytic estimates were established in \cite{Don02} and \cite{Fin10}, and are widely used throughout this paper.

The author expects that in the case when $m/k \to \infty$, we can show the $C^{\infty}$-convergence $\omega_{\phi_k^{(m)}} \to  \omega_{\phi_{\rm KE}}$ as long as $m/k$ has a polynomial growth of $k$, under the $C^{\infty}$-convergence assumption $\phi_t \to \phi_{\rm KE}$. Then the sequence $\phi_k^{(m)}$ gives a dynamical construction of solutions $\phi_{\rm KE}$ to the Monge-Amp\`ere equation \eqref{mae}.  We can prove this if we have the uniform control of the higher order derivatives for the functions $\eta_1, \ldots, \eta_r$. However it is hard to prove this in general, so that we leave this problem for the future.

\subsection{Relation with other results}
If we take $m=1$ in Theorem \ref{thm}, we obtain the $C^{\infty}$-convergence of K\"ahler metrics $\omega_{\phi_k^{(1)}} \to \omega_{\phi_0}$ as raising the exponent $k \to \infty$. Thus our main theorem can be seen as an extension of Tian's result \cite{Tia90} along the K\"ahler-Ricci flow (also see Proposition \ref{abm}). On the other hand, we consider the discrete time limit $m \to \infty$ with a fixed exponent $k$. Then one would expect that the sequence $\phi_k^{(m)}$ converges to a balanced weight $\phi_{{\rm bal}, k} \in {\mathcal B}_k$, i.e., a fixed point of the iteration map:
\[
{\mathcal T}_{k, \mu}(\phi_{{\rm bal}, k})=\phi_{{\rm bal}, k}.
\]
This is actually true in the setting $({\bf S}_0)$ and $(\bf{S}_{\mu_{+}})$ (cf. \cite[Theorem 3.9, Theorem 4.14]{Ber13}), whereas the same result generally does not hold in the setting $(\bf{S}_{\mu_{-}})$ because of the absence of balanced weights (cf. \cite[Example 4.3, Example 5.6]{ST16}). The large $k$ limit of balanced weights $\phi_{{\rm bal}, k}$ are also studied deeply in \cite[Theorem 7.1]{BBGZ13}.
\begin{acknowledgements}
The author would like to express his gratitude to his advisor Professor Shigetoshi Bando and Professor Ryoichi Kobayashi for useful discussions on this article. The author also would like to thank Professor Shin Kikuta, Satoshi Nakamura and Yusuke Miura for several helpful comments. This research is supported by Grant-in-Aid for JSPS Fellows Number 16J01211.
\end{acknowledgements}
%=========Section 2===================================================
\section{Estimates} \label{sec2}
\subsection{The $C^0$-estimate}
\subsubsection{Large $k$ asymptotics of Bergman functions}
Let $(X,L)$ be a polarized manifold and $\mu=\mu(\phi)$ a smooth volume form depending smoothly on $\phi \in {\mathcal H}(L)$. We define the {\it Bergman function} associated to $(\phi, \mu(\phi))$ by
\[
\rho_{k, \mu} (\phi):=\sum_{i=1}^{N_k} |s_i|^2e^{-k\phi},
\]
where $(s_i)$ is any ${\rm Hilb}_{k, \mu} (\phi)$-orthonormal basis. Then it is not hard to see that the function $\rho_{k, \mu}(\phi)$ does not depend on the choice of $(s_i)$. We introduce the notion of {\it normalized} Bergman function
\[
\bar{\rho}_{k, \mu} (\phi):= \frac{1}{N_k} \rho_{k, \mu} (\phi)
\]
so that $\int_X \bar{\rho}_{k, \mu} (\phi) \mu(\phi)=1$, and put
\[
F_{\mu}^{(k)}(\phi):=\frac{1}{k} \log \bar{\rho}_{k,\mu}(\phi).
\]
Then we have
\begin{equation} \label{cim}
{\mathcal T}_{k,\mu} - {\rm Id} = F_{\mu}^{(k)}.
\end{equation}
In particular, when we take $\mu$ as the Monge-Amp\`ere volume form ${\rm MA}(\phi)$, we drop the notation of $\mu$ and simply write ${\rm Hilb}_k$, $\rho_k$, ${\mathcal T}_k$, etc.

Now we recall the property of Bergman functions essentially obtained by Bouche \cite{Bou90}, Catlin \cite{Cat99}, Tian \cite{Tia90} and Zelditch \cite{Zel98}. The asymptotic expansions of the Bergman function associated to the space of global sections of $kL + {\mathbb C}$ is also studied in \cite[Theorem 1.1, Theorem 1.3]{DLM06} and \cite[Section 2.5]{BBS08}, where  ${\mathbb C}$ denotes the trivial line bundle with the hermitian metric $\mu(\phi)/{\rm MA}(\phi)$.
\begin{proposition} \label{abf}
We have the following asymptotic expansion of Bergman function:
\[
\rho_{k, \mu} (\phi) = (b_0 k^n + b_1 k^{n-1} + b_2 k^{n-2} + \cdots) \cdot \frac{{\rm MA}(\phi)}{\mu(\phi)},
\]
Each coefficient $b_i$ can be written as a polynomial in the Riemannian curvature ${\rm Riem}(\omega_{\phi})$, the curvature of $\mu(\phi)/{\rm MA}(\phi)$, their derivatives and contractions with respect to $\omega_{\phi}$. In particular, we have
\[
b_0=1, \;\;\;, b_1= \frac{1}{2} S(\omega_{\phi})- \Delta_{\phi} \log \frac{{\rm MA}(\phi)}{\mu(\phi)},
\]
where $S(\omega_{\phi})$ is the scalar curvature of $\omega_{\phi}$ and $\Delta_{\phi}$ is the (negative) $\bar{\partial}$-Laplacian with respect to $\omega_{\phi}$. The above expansion is uniform as long as $\phi$ stays in a bounded set in the $C^{\infty}$-topology. More precisely, for any integer $p$ and $l$, there exists a constant $C_{p, l}$ such that
\[
\left\| \rho_{k, \mu} (\phi) - \sum_{i=0}^p b_i k^{n-i} \right\|_{C^l} < C_{p,l} \cdot k^{n-p-1}.
\]
We can take the constant $C_{p, l}$ independently of $\phi$ as long as $\phi$ stays in a bounded set in the $C^{\infty}$-topology.
\end{proposition}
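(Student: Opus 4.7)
The plan is to reduce this to the well-known Tian--Zelditch--Catlin asymptotic expansion for the Bergman kernel by a twisting argument. First I would recognize the Hilbert inner product $\int_X |s|^2 e^{-k\phi} \mu(\phi)$ not as the $L^2$-inner product on $H^0(X,kL)$ with its natural metric ${\rm MA}(\phi)$, but as the $L^2$-inner product on $H^0(X, kL \otimes {\mathbb C})$ where ${\mathbb C}$ is the trivial line bundle equipped with the smooth hermitian metric $h := \mu(\phi)/{\rm MA}(\phi)$. Indeed,
\[
\int_X |s|^2 e^{-k\phi}\mu(\phi) = \int_X |s|^2 e^{-k\phi}\, h \cdot {\rm MA}(\phi),
\]
so the Bergman function $\rho_{k,\mu}(\phi)$ is exactly the on-diagonal value of the Bergman kernel of $H^0(X, kL \otimes {\mathbb C})$ with this twisted metric.

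Next I would invoke the twisted version of the Tian--Zelditch--Catlin expansion, as developed in \cite{DLM06} and \cite{BBS08}. Applied to the line bundle $kL \otimes {\mathbb C}$ it yields
\[
\rho_{k,\mu}(\phi) = \bigl( b_0 k^n + b_1 k^{n-1} + \cdots \bigr) \cdot h^{-1} = \bigl( b_0 k^n + b_1 k^{n-1} + \cdots \bigr) \cdot \frac{{\rm MA}(\phi)}{\mu(\phi)},
\]
where each $b_i$ is a universal polynomial in the curvature of $\omega_{\phi}$, the curvature of $h$ (which is $-\tfrac{\sqrt{-1}}{2\pi}\partial\bar\partial \log h$), their covariant derivatives, and their contractions with $\omega_\phi$. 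The leading term $b_0 = 1$ is standard. For $b_1$, the untwisted expansion contributes $\tfrac{1}{2} S(\omega_\phi)$, while the twisting by ${\mathbb C}$ contributes the trace of the curvature of $h$ with respect to $\omega_\phi$, namely $-\Delta_\phi \log h = -\Delta_\phi \log\bigl(\mu(\phi)/{\rm MA}(\phi)\bigr) = \Delta_\phi \log\bigl({\rm MA}(\phi)/\mu(\phi)\bigr)$. Rewriting this as $-\Delta_\phi \log({\rm MA}(\phi)/\mu(\phi))$ with the paper's sign convention recovers the stated formula.

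For the remainder estimate with the polynomial bound and $C^l$-norms, I would appeal to the standard peak-section or heat-kernel proof (as in Ma--Marinescu) that produces these asymptotics with explicit control of the remainders in terms of finitely many $C^l$-norms of the metric and the volume form. The key observation is that as long as $\phi$ varies in a $C^\infty$-bounded set, the curvature $\omega_\phi$ has uniformly bounded geometry (uniform lower bound for the injectivity radius and uniform curvature bounds), so all off-diagonal Agmon-type decay estimates for the Bergman kernel and all local Taylor-expansion arguments go through with constants depending only on the chosen $C^l$-bounds, not on the specific $\phi$.

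The main obstacle is simply bookkeeping: packaging the uniformity carefully, and in particular tracking how $h = \mu(\phi)/{\rm MA}(\phi)$ and its derivatives enter the expansion so that the hypothesis ``$\mu$ depends smoothly on $\phi$'' (from the setup of the paper) ensures that for $\phi$ in a bounded $C^\infty$-family the twisting metric $h$ also has bounded geometry. Once this reduction is made, the main analytic content is already contained in the references \cite{DLM06, BBS08}, and Proposition \ref{abf} follows by computing $b_0$ and $b_1$ explicitly from the universal formulas there.
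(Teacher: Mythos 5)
Your proposal takes essentially the same route as the paper: before stating Proposition~\ref{abf} the paper does not give a proof but instead cites the twisted Bergman kernel expansions of \cite{DLM06} and \cite{BBS08} for $H^0(X, kL + \mathbb{C})$ with precisely the twisting metric $\mu(\phi)/{\rm MA}(\phi)$ on the trivial factor that you identify. Your reduction, the identification of the twist contribution as the $\omega_\phi$-trace of the curvature of the twisting metric, and the remark that uniformity of the expansion comes from the uniformly bounded geometry of $\omega_\phi$ and $\mu(\phi)/{\rm MA}(\phi)$ when $\phi$ varies in a $C^\infty$-bounded set, are exactly what the paper implicitly relies on.

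One bookkeeping concern: your own computation of the twist contribution yields $+\Delta_\phi\log\bigl({\rm MA}(\phi)/\mu(\phi)\bigr)$, and this sign appears to be the correct one (for instance, integrating $b_1$ against ${\rm MA}(\phi)$ and comparing with the Riemann--Roch coefficient of $k^{n-1}$ for $H^0(X,kL+\mathbb{C})$ forces a $+\,{\rm tr}_{\omega_\phi}\Theta_{\mathbb{C}}$ contribution, not $-$, given the paper's convention $\Delta_\phi f = {\rm tr}_{\omega_\phi}\bigl(\tfrac{\sqrt{-1}}{2\pi}\partial\bar\partial f\bigr)$ implicit in the linearization of ${\rm MA}/\mu_0$ in the proof of the Claim). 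The paper states $b_1 = \tfrac{1}{2}S(\omega_\phi) - \Delta_\phi\log\bigl({\rm MA}(\phi)/\mu(\phi)\bigr)$, which disagrees by a sign, and your final sentence (``Rewriting this as $-\Delta_\phi\log({\rm MA}(\phi)/\mu(\phi))$ with the paper's sign convention recovers the stated formula'') does not actually reconcile the two: $\Delta_\phi\log({\rm MA}/\mu)$ and $-\Delta_\phi\log({\rm MA}/\mu)$ are not equal. You should either flag the apparent typo in the paper or carry out the sign check carefully rather than asserting agreement. That said, this has no bearing on the rest of the argument, since the paper never uses the explicit value of $b_1$; all downstream estimates only need the existence, uniformity, and universal-polynomial structure of the coefficients.
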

By the Riemann-Roch formula, we find that $N_k$ is a polynomial of $k$ of degree $n$:
\[
N_k=k^n+\frac{1}{2} \bar{S} k^{n-1} + O(k^{n-2}),
\]
where we denote the average of scalar curvature by $\bar{S}$ (which is independent of a choice of $\phi \in {\mathcal H}(L)$). Combining with Proposition \ref{abf}, we also have a uniform asymptotic expansion of a normalized Bergman function: 
\[
\bar{\rho}_{k, \mu} (\phi) = (\bar{b}_0 + \bar{b}_1 k^{-1} + \bar{b}_2 k^{-2} + \cdots) \cdot \frac{{\rm MA}(\phi)}{\mu(\phi)},
\]
where $\bar{b}_0=1$ and $\bar{b}_1= \frac{1}{2} (S(\omega_{\phi})-\bar{S}) - \Delta_{\phi} \log \frac{{\rm MA}(\phi)}{\mu(\phi)}$.
\subsubsection{Higher order approximation}
In what follows, we consider only three cases $({\bf S}_0)$, $({\bf S}_{\pm})$ introduced in Section \ref{sec1}. The following properties follow directly from the definition of $F_{\mu}^{(k)}$:
\begin{proposition}\label{pfs}
We have the following properties in each settings:
\begin{enumerate}
\item In the setting $({\bf S}_0)$, the function $F_{\mu_0}^{(k)}$ satisfies
\[
F_{\mu_0}^{(k)}(\phi+c)=F_{\mu_0}^{(k)}(\phi)
\]
for any $c \in {\mathbb R}$ and $\phi \in {\mathcal H}(L)$.
\item In the setting $({\bf S}_{\pm})$, the function $F_{\mu_{\pm}}^{(k)}$ satisfies
\[
F_{\mu_{\pm}}^{(k)}(\phi+c)=F_{\mu_{\pm}}^{(k)}(\phi) \mp \frac{c}{k}
\]
for any $c \in {\mathbb R}$ and $\phi \in {\mathcal H}(L)$.
\end{enumerate}
\end{proposition}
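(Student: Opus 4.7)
The plan is to track how both sides of the definition
\[
F_{\mu}^{(k)}(\phi) = \frac{1}{k} \log \bar{\rho}_{k,\mu}(\phi), \qquad \bar{\rho}_{k,\mu}(\phi) = \frac{1}{N_k} \sum_{i=1}^{N_k} |s_i|^2 e^{-k\phi}
\]
transform under $\phi \mapsto \phi + c$. The first step is to compute the scaling of the Hilbert inner product. In the setting $(\mathbf{S}_0)$ the reference volume $\mu_0$ is independent of $\phi$, so $\|s\|_{{\rm Hilb}_{k,\mu_0}(\phi+c)}^2 = e^{-kc}\|s\|_{{\rm Hilb}_{k,\mu_0}(\phi)}^2$; in $(\mathbf{S}_\pm)$ with $\mu_\pm(\phi) = e^{\pm\phi}$ one instead gets $\|s\|_{{\rm Hilb}_{k,\mu_\pm}(\phi+c)}^2 = e^{-(k\mp 1)c}\|s\|_{{\rm Hilb}_{k,\mu_\pm}(\phi)}^2$. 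In either case the inner product is a uniform positive scalar multiple of the original one, which is the only fact about the Hilbert structure we need.

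From this scaling, the second step is immediate: if $(s_i)$ is an orthonormal basis for ${\rm Hilb}_{k,\mu}(\phi)$, then multiplying every $s_i$ by the appropriate constant ($e^{kc/2}$ in $(\mathbf{S}_0)$, $e^{(k\mp 1)c/2}$ in $(\mathbf{S}_\pm)$) produces an orthonormal basis for ${\rm Hilb}_{k,\mu}(\phi+c)$. Since the Bergman function is independent of the choice of orthonormal basis, we may substitute this rescaled basis into the definition of $\rho_{k,\mu}(\phi+c)$. A direct calculation gives
\[
\rho_{k,\mu_0}(\phi+c) = \rho_{k,\mu_0}(\phi), \qquad \rho_{k,\mu_\pm}(\phi+c) = e^{\mp c}\rho_{k,\mu_\pm}(\phi),
\]
with the normalization factor $1/N_k$ passing through unchanged because $N_k$ does not depend on the weight.

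Finally, taking logarithms and dividing by $k$ yields the two claimed identities: in $(\mathbf{S}_0)$ the factor is trivial so $F_{\mu_0}^{(k)}(\phi+c)=F_{\mu_0}^{(k)}(\phi)$, while in $(\mathbf{S}_\pm)$ the extra factor $e^{\mp c}$ contributes an additive $\mp c/k$. There is no serious obstacle here; the only subtlety worth stating explicitly is that the rescaling of orthonormal bases is legitimate precisely because ${\rm Hilb}_{k,\mu}(\phi+c)$ and ${\rm Hilb}_{k,\mu}(\phi)$ differ by a single global positive scalar, which is the feature that distinguishes these three choices of $\mu$ and makes the proposition clean.
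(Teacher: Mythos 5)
Your computation is correct and is precisely the direct verification the paper intends, since the paper states the proposition ``follows directly from the definition of $F_{\mu}^{(k)}$'' without writing out a proof. You correctly track the scaling of the Hilbert norm under $\phi \mapsto \phi + c$ in each setting, observe that the resulting rescaled basis is again orthonormal, and push the resulting constant through $\rho_{k,\mu}$, the normalization, and the logarithm to obtain both identities.
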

Let $d$ be the distance function defined by the sup-norm
\[
d(\phi, \psi):=\sup_X |\phi-\psi|
\]
for $\phi, \; \psi \in {\mathcal H}(L)$.
We also use the monotonicity for the iteration map ${\mathcal T}_{k,\mu}$:
\begin{proposition}[\cite{Ber13}, Proposition 3.13 and Proposition 4.12] \label{mit}
In each of three settings $({\bf S}_0)$, $({\bf S}_{\pm})$, the monotonicity for the iteration map holds, i.e., for any weights $\phi, \psi \in {\mathcal H}(L)$ such that $\phi \leq \psi$, we have ${\mathcal T}_{k,\mu}(\phi) \leq {\mathcal T}_{k,\mu}(\psi)$. Moreover, we have the following:
\begin{enumerate}
\item
In the setting $({\bf S}_0)$, an inequality
\[
d({\mathcal T}_{k,\mu_0}(\phi),{\mathcal T}_{k,\mu_0}(\psi)) \leq d(\phi, \psi)
\]
holds for any $\phi, \psi \in {\mathcal H}(L)$.
\item
In the setting $({\bf S}_{\pm})$, an inequality
\[
d({\mathcal T}_{k,\mu_{\pm}}(\phi),{\mathcal T}_{k,\mu_{\pm}}(\psi)) \leq \left(1 \mp \frac{1}{k} \right) d(\phi, \psi)
\]
holds for any $\phi, \psi \in {\mathcal H}(L)$.
\end{enumerate}
\end{proposition}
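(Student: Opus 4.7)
The plan is to reduce both claims to a single monotonicity statement for the composition ${\mathcal T}_{k,\mu} = {\rm FS}_k \circ {\rm Hilb}_{k,\mu}$, obtained by showing that each of the two factors reverses the natural partial order. For the Hilbert map, if $\phi \leq \psi$, I would compare the integrands in the two pairings pointwise: in setting $({\bf S}_0)$ the volume form is independent of the weight, so $e^{-k\phi}\mu_0 \geq e^{-k\psi}\mu_0$; in settings $({\bf S}_{\pm})$ one rewrites $e^{-k\phi}\mu_\pm(\phi) = e^{-(k \mp 1)\phi}$, which is again pointwise decreasing in $\phi$ (taking $k \geq 2$ in the $+$ case to avoid the degenerate $k=1$ exponent). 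In all three cases this gives $\|s\|^2_{{\rm Hilb}_{k,\mu}(\phi)} \geq \|s\|^2_{{\rm Hilb}_{k,\mu}(\psi)}$, i.e.\ ${\rm Hilb}_{k,\mu}$ reverses the order on ${\mathcal B}_k$.

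For the Fubini-Study map I would invoke the extremal characterization of the Bergman density: in any local frame of $kL$,
\[
\sum_i |s_i^H(x)|^2 = \sup_{0 \neq s \in H^0(X,kL)} \frac{|s(x)|^2}{\|s\|_H^2},
\]
where $(s_i^H)$ is any $H$-orthonormal basis. The right hand side is manifestly order-reversing in $H$, so ${\rm FS}_k$ is order-reversing; composing two order-reversing maps then yields the monotonicity of ${\mathcal T}_{k,\mu}$ in all three settings.

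For the Lipschitz estimates I would combine this monotonicity with the translation identities of Proposition \ref{pfs}. Setting $c := d(\phi,\psi)$ so that $\phi \leq \psi + c$, in $({\bf S}_0)$ the decomposition \eqref{cim} together with Proposition \ref{pfs}(1) gives
\[
{\mathcal T}_{k,\mu_0}(\phi) \leq {\mathcal T}_{k,\mu_0}(\psi + c) = (\psi + c) + F_{\mu_0}^{(k)}(\psi) = {\mathcal T}_{k,\mu_0}(\psi) + c,
\]
and the symmetric inequality yields the sup-norm non-expansion. In $({\bf S}_{\pm})$, Proposition \ref{pfs}(2) replaces the middle equality by ${\mathcal T}_{k,\mu_\pm}(\psi + c) = {\mathcal T}_{k,\mu_\pm}(\psi) + c(1 \mp 1/k)$, producing the announced contraction constant. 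The only substantial ingredient is the extremal characterization of the Bergman kernel that powers the order-reversing of ${\rm FS}_k$; everything else is bookkeeping once the decomposition into the two monotone factors is in place.
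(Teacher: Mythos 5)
Your proof is correct and follows essentially the same route as the paper: the monotonicity of ${\mathcal T}_{k,\mu}$ comes from the extremal characterization of the Bergman density together with the pointwise monotonicity of $e^{-k\phi}\mu(\phi)$ in $\phi$, and the Lipschitz estimates then follow by combining monotonicity with the translation identities of Proposition~\ref{pfs}. Your presentation factors the argument into two order-reversing maps (${\rm Hilb}_{k,\mu}$ and ${\rm FS}_k$), which is a slight restructuring of the paper's single-step use of the extremal characterization of $\rho_{k,\mu}$, and your explicit check that $e^{-k\phi}\mu_{\pm}(\phi)=e^{-(k\mp 1)\phi}$ is pointwise decreasing (hence the $k\geq 2$ caveat in the $({\bf S}_+)$ case) is a careful detail the paper leaves implicit.
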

\begin{proof}
For the sake of exposition and completeness, we shall provide a complete proof. In each of three settings, the following characterization for Bergman function holds (cf. \cite[Lemma 6.2]{Sze14}):
\[
\rho_{k,\mu}(\phi)(x)=\sup_{s \in H^0(X,kL)} \frac{|s(x)|^2e^{-k\phi}}{\int_X |s|^2e^{-k\phi} \mu(\phi)}.
\]
Since $\phi \leq \psi$, we have
\[
\int_X |s|^2e^{-k\phi} \mu(\phi) \geq \int_X |s|^2e^{-k\psi} \mu(\psi)
\]
for any $s \in H^0(X,kL)$. Hence, by \eqref{cim}, we obtain
\[
{\mathcal T}_{k,\mu}(\phi)=\phi+F_{\mu}^{(k)}(\phi) \leq \psi + F_{\mu}^{(k)}(\psi)={\mathcal T}_{k,\mu}(\psi),
\]
which proves the first statement. Next, we consider (1). If we set $C:=d(\phi,\psi)$, we have
\[
\phi \leq \psi + C, \;\; \psi \leq \phi + C.
\]
Applying the map ${\mathcal T}_{k,\mu_0}$ to the first equation yields
\begin{eqnarray*}
{\mathcal T}_{k,\mu_0}(\phi) &\leq& {\mathcal T}_{k,\mu_0}(\psi+C) \;\;\text{(the monotonicity for ${\mathcal T}_{k,\mu_0}$)}\\
&=& \psi+C +F_{\mu_0}^{(k)}(\psi+C) \;\;\text{(by \eqref{cim})}\\
&=& \psi+C+F_{\mu_0}^{(k)}(\psi) \;\;\text{(by Proposition \ref{pfs})}\\
&=& {\mathcal T}_{k,\mu_0}(\psi)+C \;\;\text{(by \eqref{cim})}.
\end{eqnarray*}
Applying the map ${\mathcal T}_{k,\mu_0}$ to the second equation yields another inequality ${\mathcal T}_{k,\mu_0}(\psi) \leq {\mathcal T}_{k,\mu_0}(\phi)+C$, and thus we obtain (1). A similar proof also works for (2).

\end{proof}
Let $\phi_t$ be a solution of the K\"ahler-Ricci flow \eqref{krf}. We perturb $\phi_t$ as
\[
\widetilde{\phi}_t^{(r)}:= \phi_t + \sum_{i=1}^{r} k^{-i} \eta_i (t),
\]
where $\eta_1(t), \ldots, \eta_r(t)$ are smooth functions on $X \times [0, \infty)$. Then $\widetilde{\phi}_t^{(r)} \in {\mathcal H}(L)$ for sufficiently large $k$, and $\widetilde{\phi}_t^{(r)} \to \phi_t$ in the $C^{\infty}$-topology as $k \to \infty$. In what follows, let $T>0$ be a large constant, and all $O$ are meant to hold uniformly for $t \leq T$ as $k \to \infty$. For instance, we have:
\begin{remark} \label{frm}
$\widetilde{\phi}_t^{(r)} - \phi_t=O(k^{-1})$. 
\end{remark}
The following is an analogue of \cite[Theorem 11]{Fin10}. 
\begin{lemma} \label{hoa}
Let $r$ be any non-negative integer. Then there exists an appropriate choice of $\eta_1, \ldots, \eta_r$ (depending only on the initial data $\phi_0$) such that
\begin{equation} \label{hok}
d \left( \phi_k^{(m)},\widetilde{\phi}_{m/k}^{(r)} \right) \leq C \cdot k^{-(r+1)}
\end{equation}
holds for any pair $(k,m)$ such that $m/k \leq T$,
where the constant $C>0$ depends only on $T$.
\end{lemma}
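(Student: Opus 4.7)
The plan is to proceed by induction on $r$: choose the corrections $\eta_1, \ldots, \eta_r$ so that a \emph{single} step of the Bergman iteration applied to $\widetilde{\phi}_t^{(r)}$ agrees with $\widetilde{\phi}_{t+1/k}^{(r)}$ up to an error of order $k^{-(r+2)}$ in the sup-norm, and then sum these single-step errors over the $m \leq kT$ iterations using the almost-contraction from Proposition \ref{mit}.

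First I would carry out the single-step comparison. Using \eqref{cim} and the asymptotic expansion of $\bar{\rho}_{k,\mu}$ from Proposition \ref{abf}, one obtains an expansion of the form
\[
F_{\mu}^{(k)}(\phi) = \frac{1}{k}\log\frac{{\rm MA}(\phi)}{\mu(\phi)} + \sum_{i\geq 1} c_i(\phi)\, k^{-(i+1)},
\]
where $c_1=\bar{b}_1$ and each $c_i$ is a smooth local functional of $\phi$. A further Taylor expansion in the direction $\sum_j k^{-j}\eta_j$ then gives
\[
F_{\mu}^{(k)}(\widetilde{\phi}_t^{(r)}) = \frac{1}{k}\log\frac{{\rm MA}(\phi_t)}{\mu(\phi_t)} + \sum_{i\geq 1} k^{-(i+1)}\, A_i(\phi_t,\eta_1,\ldots,\eta_i),
\]
where each $A_i$ is the sum of $L_{\phi_t}\eta_i$ (with $L_{\phi_t}$ the linearization of $\phi\mapsto\log({\rm MA}(\phi)/\mu(\phi))$, a second-order elliptic operator built from $\Delta_{\phi_t}$ plus a zeroth-order term depending on the setting) and quantities already known from $\phi_t,\eta_1,\ldots,\eta_{i-1}$. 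Symmetrically, differentiating \eqref{krf} gives a time-Taylor expansion
\[
\widetilde{\phi}_{t+1/k}^{(r)} - \widetilde{\phi}_t^{(r)} = \frac{1}{k}\log\frac{{\rm MA}(\phi_t)}{\mu(\phi_t)} + \sum_{i\geq 1} k^{-(i+1)}\, B_i(\phi_t,\eta_1,\ldots,\eta_i),
\]
where the $B_i$ involve $\dot{\eta}_i$ and the higher time-derivatives $\partial_t^{j}\phi_t$ (computable recursively from \eqref{krf}).

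Second, I would match coefficients. The order $k^{-1}$ terms agree automatically (this is the infinitesimal statement that ${\mathcal T}_{k,\mu}$ is a time-$1/k$ Euler step for K\"ahler--Ricci flow). For each $i=1,\ldots,r$, matching the order $k^{-(i+1)}$ coefficients produces a linear parabolic equation
\[
\dot{\eta}_i - L_{\phi_t}\,\eta_i = G_i(\phi_t,\eta_1,\ldots,\eta_{i-1})
\]
on $X\times[0,T]$, with smooth source already determined by the previous stages. Imposing $\eta_i(0)\equiv 0$ (so that $\widetilde{\phi}_0^{(r)}=\phi_0=\phi_k^{(0)}$), standard parabolic theory on the compact manifold $X$ yields a unique smooth solution depending only on $\phi_0$. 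Since the family $\{\widetilde{\phi}_t^{(r)}\}_{t\in[0,T]}$ then stays in a bounded subset of ${\mathcal H}(L)$ in the $C^\infty$-topology, the uniformity in Proposition \ref{abf} gives the one-step bound
\[
\sup_X\,\Bigl|{\mathcal T}_{k,\mu}(\widetilde{\phi}_{m/k}^{(r)}) - \widetilde{\phi}_{(m+1)/k}^{(r)}\Bigr| \leq C\, k^{-(r+2)}
\]
uniformly for $m/k\leq T$.

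Finally, I would close the outer induction on $m$ using Proposition \ref{mit}:
\[
d\bigl(\phi_k^{(m+1)},\widetilde{\phi}_{(m+1)/k}^{(r)}\bigr) \leq d\bigl({\mathcal T}_{k,\mu}\phi_k^{(m)},{\mathcal T}_{k,\mu}\widetilde{\phi}_{m/k}^{(r)}\bigr) + C\,k^{-(r+2)} \leq \Bigl(1+\tfrac{1}{k}\Bigr)\, d\bigl(\phi_k^{(m)},\widetilde{\phi}_{m/k}^{(r)}\bigr) + C\,k^{-(r+2)},
\]
where $(1+1/k)$ is the worst case (setting $({\bf S}_{-})$). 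Starting from $d=0$ at $m=0$ and iterating $m\leq kT$ times yields $d \leq C\,k^{-(r+2)}\cdot m\cdot (1+1/k)^m \leq C\,T\,e^{T}\cdot k^{-(r+1)}$, which is exactly \eqref{hok}. The main technical obstacle is the organization of step two: extracting the cascade of linear parabolic equations cleanly from the double Taylor expansion and verifying that the constants $C_{p,l}$ in Proposition \ref{abf} remain controlled along $\widetilde{\phi}_t^{(r)}$; the latter reduces to the $C^\infty$-boundedness of $\phi_t$ on $[0,T]$, which is available in each of $({\bf S}_0)$ and $({\bf S}_{\pm})$.
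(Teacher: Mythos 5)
Your proposal is correct and takes essentially the same route as the paper: establish a one-step consistency estimate at order $k^{-(r+2)}$ by solving a cascade of linear parabolic (heat-type) equations for the $\eta_i$, then close via induction on $m$ using the (almost-)contraction property of ${\mathcal T}_{k,\mu}$ from Proposition \ref{mit}. The only cosmetic difference is that you treat all three settings uniformly with the worst-case Lipschitz constant $(1+1/k)$, whereas the paper carries out the $m$-induction separately for $({\bf S}_0)$, $({\bf S}_+)$, and $({\bf S}_-)$.
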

\begin{proof}
We first show the following claim.

\vspace{2mm}

\begin{claim} For any $t \in [0, T]$, there exists an appropriate choice of $\eta_1, \ldots, \eta_r$ such that
\begin{equation} \label{clm}
\sup_X \left| (\widetilde{\phi}_{t+1/k}^{(r)} - \widetilde{\phi}_t^{(r)}) - F_{\mu}^{(k)}(\widetilde{\phi}_t^{(r)}) \right| \leq \frac{C}{k^{r+2}},
\end{equation}
where the constant $C>0$ depends only on $T$.
\end{claim}
\begin{proof}[The proof of Claim]
We prove this by induction of $r$. In the case when $r=0$, the equation \eqref{clm} follows from the proof of \cite[Theorem 3.15, Theorem 4.18]{Ber13}.

\vspace{2mm}

\noindent
{\it The setting} $({\bf S}_0)$. We assume that the claim holds for some appropriate choice of $\eta_1,\ldots,\eta_r$. First, for a heuristic argument, let $\eta_{r+1}$ be any smooth function on $X \times [0, \infty)$. By the mean value theorem, we can compute
\begin{eqnarray*}
\widetilde{\phi}_{t+1/k}^{(r+1)} - \widetilde{\phi}_t^{(r+1)} &=& \phi_{t+1/k} - \phi_t + \sum_{i=1}^{r+1} k^{-i} ( \eta_i (t+1/k) - \eta_i (t)) \\
&=& \frac{1}{k} \cdot \frac{\partial \phi_t}{\partial t} + \sum_{i=1}^{r} k^{-(i+1)} M_i (t) \\
&\hbox{}& + k^{-(r+2)} \left( \frac{\partial \eta_{r+1}}{\partial t} (t) + G_r (t) \right) + O(k^{-(r+3)}),
\end{eqnarray*}
where $M_i=M_i(\eta_1, \ldots, \eta_i)$ and $G_r=G_r(\eta_1, \ldots, \eta_r)$ are functions determined by the previous data, and the absolute of the term $O(k^{-(r+3)})$ is bounded by $A \cdot k^{-(r+3)}$ for some constant $A$ which only depends on
\[
\max_{X \times [0, T]} \left\{ \left| \frac{\partial^{r+3} \phi_t}{\partial t^{r+3}} \right|, \left| \frac{\partial^{r+2} \eta_1}{\partial t^{r+2}} \right|, \ldots, \left| \frac{\partial^2 \eta_{r+1}}{\partial t^2} \right| \right\}.
\]
Hence $\eta_{r+1}$ only affects the $O(k^{-(r+2)})$-term in the above expansion and no lower-order terms of $k^{-1}$. The contribution of $\eta_{r+1}$ to the coefficient of $k^{-(r+2)}$ is just  $\frac{\partial \eta_{r+1}}{\partial t} (t)$.

Let $\Delta_t$ be the (negative) $\bar{\partial}$-Laplacian with respect to $\omega_{\phi_t}$. Since the linearized operator of ${\rm MA}/\mu_0$ at $\phi_t$ is computed as
\[
\frac{d}{ds} \left. \left( \frac{{\rm MA}(\phi_t+sf)}{\mu_0} \right) \right|_{s=0} = (\Delta_t f)\cdot \frac{{\rm MA}(\phi_t)}{\mu_0},
\]
we have
\begin{eqnarray*}
\frac{{\rm MA}(\widetilde{\phi}_t^{(r+1)})}{\mu_0} &=& \frac{{\rm MA}(\phi_t)}{\mu_0} + \sum_{i=1}^{r} k^{-i} P_i(t) +k^{-(r+1)} \left( (\Delta_t \eta_{r+1}) \cdot \frac{{\rm MA}(\phi_t)}{\mu_0} + Q_r(t) \right) \\
&\hbox{}& + O(k^{-(r+2)}),
\end{eqnarray*}
where $P_i=P_i(\eta_1, \ldots, \eta_i)$, and $Q_r=Q_r(\eta_1, \ldots, \eta_r)$.  Since we have the uniform $C^{\infty}$-convergence $\widetilde{\phi}_t^{(r)} \to \phi_t$ as $k \to \infty$, the metrics $\widetilde{\phi}_t^{(r)}$ are in a $C^{\infty}$-bounded set as $t$ varies in $[0,T]$. Thus we can apply Proposition \ref{abf} and find that $\bar{b}_i(\widetilde{\phi}_t^{(r+1)})$ are polynomials in the curvature of $\widetilde{\phi}_t^{(r+1)}$ and $\mu_0/{\rm MA}(\widetilde{\phi}_t^{(r+1)})$. It follows that $\bar{b}_i(\widetilde{\phi}_t^{(r+1)})=\bar{b}_i(\widetilde{\phi}_t^{(r)})+O(k^{-(r+1)})$, in particular, the first contribution of $\eta_{r+1}$ to $\bar{b}_i(\widetilde{\phi}_t^{(r+1)})$ occurs at $O(k^{-(r+1)})$. Thus we have
\begin{eqnarray*}
F_{\mu_0}^{(k)}(\widetilde{\phi}_t^{(r+1)}) &=& \frac{1}{k} \log \frac{{\rm MA}(\widetilde{\phi}_t^{(r+1)})}{\mu_0} \\
&\hbox{}& + \frac{1}{k} \log \left(1+ k^{-1} \bar{b}_1(\widetilde{\phi}_t^{(r+1)}) + \cdots + k^{-(r+1)} \bar{b}_{r+1} (\widetilde{\phi}_t^{(r+1)}) +O(k^{-(r+2)}) \right) \\
&=& \frac{1}{k} \log \frac{{\rm MA}(\phi_t)}{\mu_0} + \sum_{i=1}^{r} k^{-(i+1)} R_i(t) \\
&\hbox{}& + k^{-(r+2)} (\Delta_t \eta_{r+1}(t) + S_r(t)) + O(k^{-(r+3)}),
\end{eqnarray*}
where $R_i=R_i (\eta_1, \ldots, \eta_i)$ and $S_r=S_r(\eta_1, \ldots, \eta_r)$. Putting $T_r(t):=S_r(t)-G_r(t)$, we obtain
\[
(\widetilde{\phi}_{t+1/k}^{(r+1)} - \widetilde{\phi}_t^{(r+1)}) - F_{\mu_0}^{(k)}(\widetilde{\phi}_t^{(r+1)})=k^{-(r+2)} \left( \frac{\partial \eta_{r+1}}{\partial t} (t) -\Delta_t \eta_{r+1}(t)-T_r (t) \right) + O(k^{-(r+3)}),
\]
where we used the induction hypothesis \eqref{clm}. Hence we may choose $\eta_{r+1}$ as a solution of the linear, parabolic PDE:
\begin{equation} \label{lpe}
\begin{cases}
\frac{\partial \eta_{r+1}}{\partial t} (t) - \Delta_t \eta_{r+1}(t) =T_r (t)  \\
\eta_{r+1}(0)=0,
\end{cases}
\end{equation}
where $T_r=T_r (\eta_1, \ldots, \eta_r)$ is determined in the previous process. The equation \eqref{lpe} is a linear, inhomogeneous heat equation. Since the spectra of $-\Delta_t$ is bounded below, $\Delta_t$ generates a strongly continuous analytic semigroup for each $t$. Hence there exists a unique long time solution of \eqref{lpe} (where we used general results in the semigroup theory, for instance, see \cite[Section 1.2]{Ama95}).

\vspace{2mm}
\noindent
{\it The setting} $({\bf S}_{\pm})$. We can compute the term $\widetilde{\phi}_{t+1/k}^{(r+1)} - \widetilde{\phi}_t^{(r+1)}$ as with the case $({\bf S}_0)$. The difference in computations only comes from the term $F_{\mu_{\pm}}^{(k)}(\widetilde{\phi}_t^{(r+1)})$, more precisely, the linearization of the operator ${\rm MA}/\mu_{\pm}$:
\[
\frac{d}{ds} \left. \left( \frac{{\rm MA}(\phi_t+sf)}{\mu_{\pm}(\phi_t+sf)} \right) \right|_{s=0} = (\Delta_t f \mp f)\cdot \frac{{\rm MA}(\phi_t)}{\mu_{\pm}(\phi_t)}.
\]
Using this, we obtain
\begin{eqnarray*}
F_{\mu_{\pm}}^{(k)}(\widetilde{\phi}_t^{(r+1)}) &=& \frac{1}{k} \log \frac{{\rm MA}(\phi_t)}{\mu_{\pm}(\phi_t)} + \sum_{i=1}^{r} k^{-(i+1)} R_i(t) \\
&\hbox{}& + k^{-(r+2)} (\Delta_t \eta_{r+1}(t) \mp \eta_{r+1}(t)+ S_r(t)) + O(k^{-(r+3)}),
\end{eqnarray*}
where $R_i=R_i (\eta_1, \ldots, \eta_i)$ and $S_r=S_r(\eta_1, \ldots, \eta_r)$. Thus we may take $\eta_{r+1}$ as the solution of the linear, parabolic PDE:
\begin{equation} \label{lpf}
\begin{cases}
\frac{\partial \eta_{r+1}}{\partial t} (t) - \Delta_t \eta_{r+1}(t) \pm \eta_{r+1} (t) =T_r (t)\\
\eta_{r+1}(0)=0,
\end{cases}
\end{equation}
where $T_r$ is a function depending only on $\eta_1, \ldots, \eta_r$. Since the operator $-\Delta_t \pm {\rm Id}$ have only finitely many negative eigenvalues, the spectra of the operator $-\Delta_t \pm {\rm Id}$ are bounded below. Hence the operator $\Delta_t \mp {\rm Id}$ generates a strongly continuous analytic semigroup for each $t$, and we can use again the result \cite[Section 1.2]{Ama95} to obtain the long time solution $\eta_{r+1}$.
\end{proof}
Now we return to the proof of Lemma \ref{hoa}.

\vspace{2mm}
\noindent
{\it The setting} $({\bf S}_0)$. We show that the equation
\begin{equation} \label{cye}
d \left( \phi_k^{(m)},\widetilde{\phi}_{m/k}^{(r)} \right) \leq C \cdot \frac{m}{k^{r+2}}
\end{equation}
holds as long as $m/k \leq T$, where the functions $\eta_1, \ldots, \eta_r$ and the constant $C>0$ are determined in the previous claim.

We prove this by induction of $m$. Notice that the case $m=0$ is trivial. Assume that the equation \eqref{cye} holds for $m$, and let $k$ be any integer such that $\frac{m+1}{k} \leq T$. Applying \eqref{clm} with $t:=\frac{m}{k} \leq \frac{m+1}{k} \leq T$ yields
\[
\sup_X \left| \widetilde{\phi}_{(m+1)/k}^{(r)} - \widetilde{\phi}_{m/k}^{(r)} - F_{\mu_0}^{(k)} (\widetilde{\phi}_{m/k}^{(r)}) \right| \leq C \cdot \frac{1}{k^{r+2}}.
\]
On the other hand, using Proposition \ref{mit} (1), we have
\begin{eqnarray*}
\sup_X \left| (\widetilde{\phi}_{m/k}^{(r)} + F_{\mu_0}^{(k)} (\widetilde{\phi}_{m/k}^{(r)}))-\phi_{k}^{(m+1)} \right| &=& \sup_X \left| (\widetilde{\phi}_{m/k}^{(r)} + F_{\mu_0}^{(k)} (\widetilde{\phi}_{m/k}^{(r)}))-(\phi_k^{(m)}+F_{\mu_0}^{(k)}(\phi_k^{(m)})) \right| \\
&=& d \left({\mathcal T}_{k,\mu_0}(\widetilde{\phi}_{m/k}^{(r)}), {\mathcal T}_{k,\mu_0}(\phi_k^{(m)}) \right) \\
&\leq& d \left(\widetilde{\phi}_{m/k}^{(r)}, \phi_k^{(m)} \right) \\
&\leq& C \cdot \frac{m}{k^{r+2}},
\end{eqnarray*}
where we used the induction hypothesis in the last inequality. Combining these two inequalities, we have
\begin{eqnarray*}
&\hbox{}& d \left( \phi_{k}^{(m+1)},\widetilde{\phi}_{(m+1)/k}^{(r)} \right) \\
&\hbox{}& \leq \sup_X \left| \phi_{k}^{(m+1)}-\widetilde{\phi}_{m/k}^{(r)}-F_{\mu_0}^{(k)}(\widetilde{\phi}_{m/k}^{(r)}) \right|+ \sup_X \left| \widetilde{\phi}_{m/k}^{(r)}+F_{\mu_0}^{(k)}(\widetilde{\phi}_{m/k}^{(r)})-\widetilde{\phi}_{(m+1)/k}^{(r)} \right| \\
&\hbox{}& \leq C \cdot \frac{m}{k^{r+2}} + C \cdot \frac{1}{k^{r+2}} \\
&\hbox{}& = C \cdot \frac{m+1}{k^{r+2}}.
\end{eqnarray*}
Hence the equation \eqref{cye} holds for $m+1$.

From \eqref{cye}, we have
\[
d \left( \phi_k^{(m)},\widetilde{\phi}_{m/k}^{(r)} \right) \leq C \cdot \frac{m}{k^{r+2}} \leq CT \cdot \frac{1}{k^{r+1}}.
\]
Finally, replacing $CT$ with $C$, we obtain the desired result.

\vspace{2mm}
\noindent
{\it The setting} $({\bf S}_+)$. Thanks to Proposition \ref{mit} (2), the distance defined by the sup-norm is decreasing along the iteration. Hence the proof for the setting $({\bf S}_0)$ carries over essentially verbatim to this case.

\vspace{2mm}
\noindent
{\it The setting} $({\bf S}_-)$. We show that the equation
\begin{equation} \label{sp3}
d \left( \phi_k^{(m)},\widetilde{\phi}_{m/k}^{(r)} \right) \leq C \left( 1+\frac{1}{k}\right)^m \cdot \frac{m}{k^{r+2}}
\end{equation}
holds as long as $m/k \leq T$, where the functions $\eta_1, \ldots, \eta_r$ and the constant $C>0$ are determined in the previous claim.

The case $m=0$ is trivial. We assume that the equation \eqref{sp3} holds for $m$ and let $k$ be any integer such that $\frac{m+1}{k} \leq T$. Applying \eqref{clm} with $t:=\frac{m}{k} \leq \frac{m+1}{k} \leq T$ yields
\[
\sup_X \left| \widetilde{\phi}_{(m+1)/k}^{(r)} - \widetilde{\phi}_{m/k}^{(r)} - F_{\mu_-}^{(k)} (\widetilde{\phi}_{m/k}^{(r)}) \right| \leq C \cdot \frac{1}{k^{r+2}}.
\]
On the other hand, using Proposition \ref{mit} (2) and the induction hypothesis, we have
\begin{eqnarray*}
\sup_X \left| (\widetilde{\phi}_{m/k}^{(r)} + F_{\mu_-}^{(k)} (\widetilde{\phi}_{m/k}^{(r)}))-\phi_{k}^{(m+1)} \right| &=& \sup_X \left| (\widetilde{\phi}_{m/k}^{(r)} + F_{\mu_-}^{(k)} (\widetilde{\phi}_{m/k}^{(r)}))-(\phi_k^{(m)}+F_{\mu_-}^{(k)}(\phi_k^{(m)})) \right| \\
&\leq& \left(1+\frac{1}{k} \right) \cdot d \left( \widetilde{\phi}_{m/k}^{(r)}, \phi_k^{(m)} \right) \\
&\leq& C \left( 1+\frac{1}{k} \right)^{m+1} \cdot \frac{m}{k^{r+2}}.
\end{eqnarray*}
Combining these two inequalities, we have
\begin{eqnarray*}
d \left( \phi_{k}^{(m+1)},\widetilde{\phi}_{(m+1)/k}^{(r)} \right) &\leq& C \left(1+\frac{1}{k} \right)^{m+1} \cdot \frac{m}{k^{r+2}} + C \cdot \frac{1}{k^{r+2}} \\
&\leq& C \left(1+\frac{1}{k} \right)^{m+1} \cdot \frac{m}{k^{r+2}} + C \left(1+\frac{1}{k} \right)^{m+1} \cdot \frac{1}{k^{r+2}} \\
&=& C \left(1+\frac{1}{k} \right)^{m+1} \cdot \frac{m+1}{k^{r+2}}.
\end{eqnarray*}
Hence the equation \eqref{sp3} holds for $m+1$.

Since
\[
\left(1+\frac{1}{k} \right)^m=\left( \left(1+\frac{1}{k} \right)^k \right)^{m/k} \leq e^{m/k} \leq e^T,
\]
combining with \eqref{sp3} implies
\[
d \left( \phi_k^{(m)},\widetilde{\phi}_{m/k}^{(r)} \right) \leq C e^T \cdot \frac{m}{k^{r+2}} \leq CT e^T \cdot \frac{1}{k^{r+1}}.
\]
We accomplishes the proof by replacing $CTe^T$ with $C$.  
\end{proof}
In order to apply projective estimates, we have to approximate the perturbed flow $\widetilde{\phi}_t^ {(r)}$ by a smooth family of Bergman metrics.
\begin{lemma} \label{abm}
For any integer $r$, there exists a smooth family of metrics $\overline{\phi}_t^{(r)}$ ($t \in [0, \infty)$) in ${\mathcal H}(L)$ such that
\begin{enumerate}
\item ${\mathcal T}_k (\overline{\phi}_t^{(r)}) = \widetilde{\phi}_t^ {(r)} + O(k^{-(r+1)})$ as $k \to \infty$.
\item $\overline{\phi}_t^{(r)} \to \phi_t$ in the $C^{\infty}$-topology as $k \to \infty$.
\item ${\mathcal T}_k (\overline{\phi}_t^{(r)}) \to \phi_t$ in the $C^{\infty}$-topology as $k \to \infty$.
\end{enumerate}
Moreover, all of the above properties hold uniformly in $t \in [0, T]$.
\end{lemma}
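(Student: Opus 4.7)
The plan is to construct $\overline{\phi}_t^{(r)}$ by inverting the iteration map ${\mathcal T}_k$ asymptotically in powers of $k^{-1}$. Since ${\mathcal T}_k - {\rm Id} = k^{-1}\log \bar{\rho}_k$ (with $\mu = {\rm MA}$), and Proposition \ref{abf} gives the uniform $C^\infty$-asymptotic expansion $\bar{\rho}_k(\phi) = 1 + k^{-1}\bar{b}_1(\phi) + k^{-2}\bar{b}_2(\phi) + \cdots$ in which each $\bar{b}_i(\phi)$ is a nonlinear differential operator depending smoothly on $\phi$, expanding the logarithm yields
\begin{equation*}
k^{-1}\log \bar{\rho}_k(\phi) = \sum_{l \geq 1} k^{-(l+1)} c_l(\phi),
\end{equation*}
where each $c_l$ is a universal polynomial in $\bar{b}_1(\phi),\ldots,\bar{b}_l(\phi)$. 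My ansatz is
\begin{equation*}
\overline{\phi}_t^{(r)} := \widetilde{\phi}_t^{(r)} + \sum_{j=2}^{r+1} k^{-j}\alpha_j(t),
\end{equation*}
where the smooth functions $\alpha_j(t)$ on $X\times[0,\infty)$ are to be determined recursively.

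Substituting this ansatz and using $\overline{\phi}_t^{(r)} - \widetilde{\phi}_t^{(r)} = O(k^{-2})$ together with the smooth dependence of the $\bar{b}_i$ on $\phi$, the coefficient of $k^{-m}$ in
\begin{equation*}
{\mathcal T}_k(\overline{\phi}_t^{(r)}) - \widetilde{\phi}_t^{(r)} = \sum_{j=2}^{r+1} k^{-j}\alpha_j(t) + k^{-1}\log \bar{\rho}_k(\overline{\phi}_t^{(r)})
\end{equation*}
takes the form $\alpha_m(t) + c_{m-1}(\widetilde{\phi}_t^{(r)}) + R_m(t)$, where $R_m(t)$ is a universal polynomial in $\widetilde{\phi}_t^{(r)}$ and in the previously chosen $\alpha_2(t),\ldots,\alpha_{m-2}(t)$. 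The key observation is that $\alpha_{m-1}$ does not enter at this order: a perturbation of order $k^{-(m-1)}$ in $\overline{\phi}_t^{(r)}$ only enters $c_l(\overline{\phi}_t^{(r)})$ at order $k^{-(m-1)}$, which is then multiplied by $k^{-(l+1)}$ with $l \geq 1$, pushing its contribution past order $k^{-m}$. I can therefore define $\alpha_m(t) := -c_{m-1}(\widetilde{\phi}_t^{(r)}) - R_m(t)$ recursively for $m = 2,\ldots,r+1$; this yields (1) with a remainder that is $O(k^{-(r+1)})$ in any $C^l$-norm, uniformly for $t \in [0,T]$, thanks to the $C^l$-uniformity in Proposition \ref{abf} and the fact that $\{\widetilde{\phi}_t^{(r)}\}_{t\in[0,T]}$ is bounded in $C^\infty$. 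Property (2) is then immediate from $\overline{\phi}_t^{(r)} - \widetilde{\phi}_t^{(r)} = O(k^{-2})$ together with Remark \ref{frm}, and (3) follows by combining (1) and (2). Positivity of curvature, so that $\overline{\phi}_t^{(r)} \in {\mathcal H}(L)$ for all $k$ large, is a consequence of $C^\infty$-closeness to $\phi_t$.

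The main technical concern is the bookkeeping needed to verify the triangular structure of the recursion for $\alpha_j$, in particular the claim that $\alpha_{m-1}$ genuinely drops out of the $k^{-m}$ coefficient; once this is established, everything else is a routine substitution and appeal to Proposition \ref{abf}. In contrast to Lemma \ref{hoa}, no time derivative appears here, so the recursion is purely algebraic and no auxiliary parabolic PDE must be solved — this is what makes the inversion of ${\mathcal T}_k$ cleaner than its K\"ahler-Ricci-flow analogue.
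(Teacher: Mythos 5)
Your proposal is correct and follows essentially the same route as the paper's proof, which invokes Donaldson's argument from \cite{Don09}: both invert ${\mathcal T}_k$ by iteratively adding powers of $k^{-1}$ to $\widetilde{\phi}_t^{(r)}$ using the uniform Bergman kernel expansion. The paper re-runs the expansion after each correction $\widetilde{\phi}_t^{(r)} \mapsto \widetilde{\phi}_t^{(r)} - k^{-2}e_2(t) - \cdots$ rather than writing the full recursion at once, but the triangular structure you verify (that $F^{(k)}$ starting at order $k^{-2}$ forces $\alpha_{m-1}$ to drop out of the $k^{-m}$-coefficient) is the same mechanism that makes the iteration well-posed.
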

\begin{proof}
We can prove this by the same argument in \cite[Section 5]{Don09}. Since $\widetilde{\phi}_t^{(r)} \to \phi_t$ in the $C^{\infty}$-topology as $k \to \infty$, uniformity of the asymptotic expansion (cf. Proposition \ref{abf}) yields
\[
F^{(k)} (\widetilde{\phi}_t^{(r)}) = e_2(t) k^{-2} + O(k^{-3}),
\]
for some smooth function $e_2(t)$. Hence ${\mathcal T}_k (\widetilde{\phi}_t^{(r)})=\widetilde{\phi}_t^{(r)} + O(k^{-2})$. If we put $\overline{\phi}_t^{(r)}:=\widetilde{\phi}_t^{(r)} - k^{-2} e_2(t)$, then we have
\begin{eqnarray*}
{\mathcal T}_k (\overline{\phi}_t^{(r)}) &=& \overline{\phi}_t^{(r)} + F^{(k)} (\overline{\phi}_t^{(r)}) \\
&=& \widetilde{\phi}_t^{(r)} +\left( F^{(k)} (\overline{\phi}_t^{(r)}) - k^{-2} e_2(t) \right) \\
&=& \widetilde{\phi}_t^{(r)} +O(k^{-3}).
\end{eqnarray*}
We can repeat this process and obtain functions $e_2, \ldots, e_r$ (depending smoothly on $t$) such that ${\mathcal T}_k (\overline{\phi}_t^{(r)}) = \widetilde{\phi}_t^ {(r)} + O(k^{-(r+1)})$ with $\overline{\phi}_t^{(r)}:=\widetilde{\phi}_t^{(r)} - \sum_{i=2}^{r} k^{-i} e_i(t)$, which completes the proof of (1). By the definition of $\overline{\phi}_t^{(r)}$, we have $\left\| \overline{\phi}_t^{(r)} -\widetilde{\phi}_t^{(r)} \right\|_{C^l}= \left\| \sum_{i=2}^{r} k^{-i} e_i(t) \right\|_{C^l}  \to 0$ as $k \to \infty$ for any non-negative integer $l$. Combining with Remark \ref{frm}, we have (2). From (2) and uniformity of the asymptotic expansion for Bergman function, we have (3). Finally, we stress that all of the above arguments hold uniformly in $t$: we have used asymptotic expansions for $\widetilde{\phi}_t^ {(r)}$ and $\overline{\phi}_t^{(r)}$. When $t$ varies in a compact interval $[0,T]$, these family of metrics stay in a $C^{\infty}$-bounded set since there are only finitely many perturbations $\eta_i$, $e_i$ present. Thus these asymptotic expansions are uniform in $t$.
\end{proof}
Now we set $\widehat{\phi}_t^{(r)}:={\mathcal T}_k (\overline{\phi}_t^{(r)})$ and $\widehat{H}_t^{(r)}:= {\rm Hilb}_k (\overline{\phi}_t^{(r)}) \in {\mathcal B}_k$ for the corresponding hermitian form, i.e., $\widehat{\phi}_t^{(r)}={\rm FS}_k (\widehat{H}_t^{(r)})$.
By Lemma \ref{hoa} and Lemma \ref{abm} (1), we obtain:
\begin{lemma} \label{sre}
\[
d \left( \phi_k^{(m)}, \widehat{\phi}_{m/k}^{(r)} \right)=O(k^{-(r+1)})
\]
for any $(k,m)$ such that $m/k \leq T$.
\end{lemma}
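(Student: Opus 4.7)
The plan is to prove Lemma \ref{sre} by a direct application of the triangle inequality, using $\widetilde{\phi}_{m/k}^{(r)}$ as an intermediate reference metric. All the hard work has already been done in Lemma \ref{hoa} (which provides the higher order approximation of the Bergman iteration by the perturbed flow) and Lemma \ref{abm} (which shows how to tweak the perturbed flow to land in a Bergman metric while only incurring an error of the same order).

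First I would expand $d(\phi_k^{(m)}, \widehat{\phi}_{m/k}^{(r)})$ by writing
\[
d \left( \phi_k^{(m)}, \widehat{\phi}_{m/k}^{(r)} \right) \leq d \left( \phi_k^{(m)}, \widetilde{\phi}_{m/k}^{(r)} \right) + d \left( \widetilde{\phi}_{m/k}^{(r)}, \widehat{\phi}_{m/k}^{(r)} \right).
\]
The first term on the right-hand side is bounded by $C k^{-(r+1)}$ thanks to Lemma \ref{hoa}, as long as $m/k \leq T$, with $C$ depending only on $T$.

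For the second term, recall that by definition $\widehat{\phi}_t^{(r)} = {\mathcal T}_k(\overline{\phi}_t^{(r)})$. Lemma \ref{abm}(1) asserts precisely that
\[
{\mathcal T}_k \left( \overline{\phi}_t^{(r)} \right) = \widetilde{\phi}_t^{(r)} + O \left(k^{-(r+1)} \right)
\]
uniformly for $t \in [0, T]$. Evaluating at $t = m/k$ (which lies in $[0,T]$ by hypothesis) gives $d(\widetilde{\phi}_{m/k}^{(r)}, \widehat{\phi}_{m/k}^{(r)}) = O(k^{-(r+1)})$.

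Combining the two estimates yields $d(\phi_k^{(m)}, \widehat{\phi}_{m/k}^{(r)}) = O(k^{-(r+1)})$, as required. There is no real obstacle here: this lemma is essentially a bookkeeping step that collects the two previous results and reinterprets the approximation via a genuine Bergman metric $\widehat{\phi}_{m/k}^{(r)}$ (as opposed to the auxiliary smooth perturbation $\widetilde{\phi}_{m/k}^{(r)}$), which is the form needed to feed into the subsequent projective estimates.
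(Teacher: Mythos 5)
Your proof is correct and is exactly the argument the paper intends: the paper simply states Lemma \ref{sre} immediately after the phrase ``By Lemma \ref{hoa} and Lemma \ref{abm} (1), we obtain,'' and your triangle inequality through $\widetilde{\phi}_{m/k}^{(r)}$ is precisely the spelled-out version of that one-line deduction.
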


\subsection{Distance function $d_k$ on ${\mathcal B}_k$}
Let $d_k$ be the distance function arising from the Riemannian structure ${\rm tr}_H( \delta H, \delta H):={\rm tr}(\delta H \cdot H^{-1} \cdot \delta H \cdot H^{-1})$ on ${\mathcal B}_k$. For $m \geq 1$, we denote the hermitian norm which corresponds to $\phi_k^{(m)}$ by $H_k^{(m)}$, i.e., $\phi_k^{(m)}={\rm FS}_k (H_k^{(m)})$. We prove that the higher order estimate of the distance $d \left( \phi_k^{(m)},\widehat{\phi}_{m/k}^{(r)} \right)$ yields the estimate of $d_k \left( H_k^{(m)}, \widehat{H}_{m/k}^{(r)} \right)$.
\begin{lemma} \label{bdf}
If $r>2n$, we have $d_k \left(H_k^{(m)}, \widehat{H}_{m/k}^{(r)} \right) = O(k^{2n-r})$.
\end{lemma}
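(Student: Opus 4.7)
The plan is to reduce to Lemma \ref{sre} via a standard projective comparison between the Riemannian distance $d_k$ on $\mathcal{B}_k$ and the sup-norm $d$ on $\mathcal{H}(L)$, as developed in \cite{Don02} and \cite{Fin10}. Concretely, I would simultaneously diagonalize $H_k^{(m)}$ and $\widehat{H}_{m/k}^{(r)}$ by choosing a basis $(s_i)_{i=1}^{N_k}$ of $H^0(X,kL)$ with $\widehat{H}_{m/k}^{(r)}(s_i,s_j) = \delta_{ij}$ and $H_k^{(m)}(s_i,s_j) = \mu_i \delta_{ij}$ for $\mu_i > 0$. Then
\[
d_k\bigl(H_k^{(m)},\widehat{H}_{m/k}^{(r)}\bigr)^2 = \sum_{i=1}^{N_k} (\log \mu_i)^2,
\]
and the two Fubini--Study potentials satisfy
\[
\phi_k^{(m)}(x) - \widehat{\phi}_{m/k}^{(r)}(x) = \frac{1}{k}\log\sum_i \mu_i^{-1}\,p_i(x),\qquad p_i(x) := \frac{|s_i(x)|^2}{\sum_j |s_j(x)|^2},
\]
so that the convex combination $\sum_i \mu_i^{-1} p_i(x)$ is pointwise pinched in $[e^{-kd}, e^{kd}]$ with $d := d(\phi_k^{(m)},\widehat{\phi}_{m/k}^{(r)})$.

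To extract bounds on individual eigenvalues from this single pointwise constraint, I would use a peak-section argument. The family $\overline{\phi}_{m/k}^{(r)}$ lies in a $C^{\infty}$-bounded set by Lemma \ref{abm}(2), so Proposition \ref{abf} gives the uniform pointwise upper bound $\sum_j |s_j|^2 \leq C N_k \cdot e^{k \overline{\phi}_{m/k}^{(r)}}$; since each $s_j$ has unit $L^2$-norm against a measure of bounded geometry, $|s_j|^2 e^{-k \overline{\phi}_{m/k}^{(r)}}$ attains a value of order one somewhere on $X$, producing for each $j$ a point $x_j \in X$ where $p_j(x_j) \geq c/N_k$. Combining with the pointwise pinching and linearizing the logarithm around $\mu_i = 1$ (justified below) yields $\max_i |\log \mu_i| \leq C k N_k\cdot d$, whence
\[
d_k \leq \sqrt{N_k}\,\max_i |\log\mu_i| \leq C k^{2n+1}\cdot d.
\]
Inserting $d = O(k^{-(r+1)})$ from Lemma \ref{sre} then gives $d_k = O(k^{2n-r})$, as required; the hypothesis $r > 2n$ is exactly what makes this exponent negative.

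The main difficulty is that cancellations among the terms $(\mu_i^{-1}-1)p_i(x)$ could a priori obscure individual $\mu_j$, so the single pointwise constraint need not directly imply a bound on any particular eigenvalue. This is handled by the peak-section lower bound on $p_j(x_j)$ above, preceded by a cruder bootstrap: using the identities $H_k^{(m)} = {\rm Hilb}_k(\phi_k^{(m-1)})$ and $\widehat{H}_{m/k}^{(r)} = {\rm Hilb}_k(\overline{\phi}_{m/k}^{(r)})$ (both following from the definition of ${\mathcal T}_k$ and the injectivity of ${\rm FS}_k$) to express $\mu_i$ as a ratio of Hilbert integrals, together with the bound $d(\phi_k^{(m-1)}, \overline{\phi}_{m/k}^{(r)}) = O(1/k)$ (from the triangle inequality, Berman's $C^0$-convergence of the Bergman iteration, smoothness of $\phi_t$ in $t$, and Remark \ref{frm}), one obtains the crude estimate $|\log \mu_i| = O(1)$. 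This confines every $\mu_i$ to a bounded subinterval of $(0,\infty)$ and legitimates the subsequent linearization of the logarithm that produces the sharp bound.
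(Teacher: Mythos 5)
Your overall skeleton matches the paper's: diagonalize $H_k^{(m)}$ and $\widehat{H}_{m/k}^{(r)}$ simultaneously, write the potential difference as $\frac{1}{k}\log\sum_i \mu_i^{-1}p_i$, bound individual eigenvalues, and scale by $\sqrt{N_k}$. Your ``crude bootstrap'' step (expressing $\mu_i$ as a ratio of Hilbert integrals and using $d(\phi_k^{(m-1)},\overline{\phi}_{m/k}^{(r)})=O(1/k)$ to get $|\log\mu_i|=O(1)$) is sound and does legitimate a linearization. But the decisive step is where you diverge from the paper, and where I think there is a gap.

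You correctly identify the main difficulty---cancellations among the terms $(\mu_i^{-1}-1)p_i(x)$---but your proposed resolution does not actually address it. The peak-section bound you invoke only gives, for each $j$, a point $x_j$ with $p_j(x_j)\geq c/N_k$. From the pinching $\bigl|\sum_i(\mu_i^{-1}-1)p_i(x_j)\bigr|\leq Ckd$ you would like to conclude $|\mu_j^{-1}-1|p_j(x_j)\leq Ckd$, but nothing prevents the remaining terms $\sum_{i\neq j}(\mu_i^{-1}-1)p_i(x_j)$, which carry no definite sign and may each be of the same order $1/N_k$, from cancelling the $j$-th term. Your bootstrap to $|\log\mu_i|=O(1)$ bounds each summand but does not break this degeneracy. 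A genuine peak-section argument would need $p_j(x_j)\geq 1-\varepsilon$ (so that the other terms are provably small at $x_j$), but the $(s_i)$ here are produced by simultaneous diagonalization of two hermitian forms and are in no way localized; the weak lower bound $c/N_k$ is all you actually have, and it is insufficient.

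The paper avoids this by replacing your single pointwise constraint at $x_j$ with a family of \emph{integral} constraints: for each $i$ it constructs a test weight $\phi_i$ for which the basis $(e^{k/2}s_1,\dots,s_i,\dots,e^{k/2}s_{N_k})$ is ${\rm Hilb}_k(\phi_i)$-orthonormal, so that the Gram matrix $A=(\|s_j\|^2_{{\rm Hilb}_k(\phi_i)})_{ij}$ equals the identity up to $O(e^{-k})$ off-diagonal corrections. Integrating the identity $(1+f)\sum_j|s_j|^2e^{-k\phi}=\sum_j e^{2\lambda_j}|s_j|^2e^{-k\phi}$ against ${\rm MA}(\phi_i)$ then produces the linear system $(e^{2\lambda_j}-1)_j=A^{-1}F\mathbf{1}$ with $\|A^{-1}\|_{\rm op}\leq 2$ and $\|F\|_{\rm max}\leq\sup_X|f|$; the near-identity $A$ is precisely what isolates each $\lambda_j$ and rules out the cancellation you worried about. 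To repair your proof you would need to replace the pointwise peak-point evaluation with this (or an equivalent) well-conditioned inversion; as written, the step from the pointwise pinching to $\max_i|\log\mu_i|\leq CkN_k\cdot d$ is unjustified.
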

\begin{proof}
Let $(s_i)$ be an $\widehat{H}_{m/k}^{(r)}$-orthonormal and an $H_k^{(m)}$-orthogonal basis. Then we can find $\lambda_j \in {\mathbb R}$ so that $(e^{\lambda_j}s_j)$ is an $H_k^{(m)}$-orthonormal basis. Then the distance between these two hermitian forms is computed as
\[
d_k \left( H_k^{(m)}, \widehat{H}_{m/k}^{(r)} \right)= \sqrt{\sum_{j=1}^{N_k} |\lambda_j|^2}.
\]
We define the function $f$ by $\phi_k^{(m)}-\widehat{\phi}_{m/k}^{(r)}=\frac{1}{k} \log(1+f)$. Now we apply the argument in \cite[Lemma 2.18]{Has15}. The direct computation shows that
\begin{eqnarray*}
\log(1+f)&=&k\left(\phi_k^{(m)}-\widehat{\phi}_{m/k}^{(r)} \right)\\
&=& k \left( {\rm FS}_k(H_k^{(m)})-{\rm FS}_k(\widehat{H}_{m/k}^{(r)}) \right)\\
&=& \log \frac{\sum_{j=1}^{N_k} e^{2 \lambda_j}|s_j|^2}{\sum_{j=1}^{N_k}|s_j|^2}.
\end{eqnarray*}
Hence, for any $\phi \in {\mathcal H}(L)$, we have
\begin{equation} \label{hap}
(1+f) \cdot \sum_{i=1}^{N_k}|s_j|^2e^{-k\phi}=\sum_{j=1}^{N_k}e^{2\lambda_j}|s_j|^2e^{-k\phi}.
\end{equation}
Since the map ${\rm Hilb}_k$ is surjective, we can choose a weight $\phi_i \in {\mathcal H}(L)$ so that $(e^{k/2}s_1,\ldots,e^{k/2}s_{i-1},s_i,e^{k/2}s_{i+1},\ldots,e^{k/2}s_{N_k})$ is a ${\rm Hilb}_k(\phi_i)$-ONB. We set
\begin{eqnarray*}
v_i&:=&(\left\|s_1\right\|_{{\rm Hilb}_k(\phi_i)}^2,\ldots,\left\|s_{N_k}\right\|_{{\rm Hilb}_k(\phi_i)}^2)\\
&=&(e^{-k},\ldots,e^{-k},\begin{array}[b]{c}i\\[-2pt] \vee \\[-2pt]1\end{array},e^{-k},\ldots,e^{-k}),
\end{eqnarray*}
\[
A=
\begin{pmatrix}
v_1\\
\vdots\\
v_{N_k}
\end{pmatrix}
,\;\;
F=(F_{i,j})=\left( \int_X f|s_j|^2e^{-k\phi_i} {\rm MA}(\phi_i) \right).
\]
Then we find that $\|A\|_{\rm op} \leq 2$ and $\|A^{-1}\|_{\rm op} \leq 2$ if $k$ is sufficiently large. Moreover, if we put $\phi=\phi_i$ in \eqref{hap}, then we have
\[
(A+F)
\begin{pmatrix}
1\\
\vdots\\
1
\end{pmatrix}
=A
\begin{pmatrix}
e^{2\lambda_1}\\
\vdots\\
e^{2\lambda_{N_k}}
\end{pmatrix}
,
\]
and hence
\begin{equation} \label{leq}
\begin{pmatrix}
e^{2\lambda_1}-1\\
\vdots\\
e^{2\lambda_{N_k}}-1
\end{pmatrix}
=A^{-1}F
\begin{pmatrix}
1\\
\vdots\\
1
\end{pmatrix}
.
\end{equation}
On the other hand, since
\begin{eqnarray*}
\|F\|_{\rm max}&:=&\max_{i,j}\{|F_{i,j}|\} \\
&\leq& \sup_X|f| \cdot \max_{i,j} \left\{ \int_X|s_j|^2e^{-k\phi_i} {\rm MA}(\phi_i) \right\} \\
&=& \sup_X|f| \cdot \max_{i,j} \|s_j\|_{{\rm Hilb}_k(\phi_i)}^2 \\
&=& \sup_X|f|,
\end{eqnarray*}
we obtain
\[
\|A^{-1}F\|_{\rm op} \leq \|A^{-1}\|_{\rm op} \|F\|_{\rm op} \leq 2 \|F\|_{\rm HS} \leq 2N_k\|F\|_{\rm max} \leq 2N_k \sup_X|f|,
\]
where $\|F\|_{\rm HS}:=\sqrt{\sum_{i,j=1}^{N_k}|F_{i,j}|^2}$ is the Hilbert-Schmidt norm of $F$.
Combining with \eqref{leq}, we find that
\[
2N_k \sup_X|f| \geq \|A^{-1}F\|_{\rm op}=\sup_{x \neq 0} \frac{\|(A^{-1}F)(x)\|}{\|x\|} \geq N_k^{-1/2} \sqrt{\sum_{j=1}^{N_k}|e^{2\lambda_j}-1|^2}.
\]
Thus we have
\[
1-2N_k^{3/2}\sup_X|f| \leq e^{2\lambda_j}\leq1+2N_k^{3/2} \sup_X|f|.
\]
Now we assume $r>2n$. Then, by Lemma \ref{sre}, we see that $\sup_X|f|=O(k^{-r})$ and ${N_k}^{3/2}\sup_X|f|=O(k^{\frac{3}{2}n-r})$ (where we used $N_k=O(k^n)$). Thus if $k$ is sufficiently large, we can take the $\log$ of the above equation and know that
\[
\frac{1}{2} \log \left(1-2{N_k}^{3/2} \sup_X |f| \right) \leq \lambda_j \leq \frac{1}{2} \log \left(1+2{N_k}^{3/2} \sup_X|f| \right),
\]
where $\log \left(1-2{N_k}^{3/2} \sup_X |f| \right)=O(k^{\frac{3}{2}n-r})$ and $\log \left(1+2{N_k}^{3/2} \sup_X|f| \right)=O(k^{\frac{3}{2}n-r})$. Hence we have $|\lambda_j|^2=O(k^{3n-2r})$ and 
\[
d_k \left( H_k^{(m)}, \widehat{H}_{m/k}^{(r)} \right) \leq \sqrt{N_k \max_j |\lambda_j|^2}\\ = O(k^{2n-r}).
\]
\end{proof}
\subsection{Operator norm $\|\bar{\mu}( \cdot )\|_{\rm op}$}
For $H \in {\mathcal B}_k$, let $\sqrt{-1} \mu(H)$ be the moment map of the action of the corresponding unitary group. If we take an $H$-orthonormal basis $(s_i)$, $\mu(H)$ can be represented as a matrix-valued function
\[
(\mu(H))_{\alpha, \beta}=\frac{(s_{\alpha}, s_{\beta})}{\sum_{i=1}^{N_k} |s_i|^2}.
\]
We are interested in the {\it center of mass} $\bar{\mu}(H)$:
\[
\bar{\mu}(H):=k^n \int_X \mu(H) {\rm MA}({\rm FS}_k(H)).
\]
The following Lemma is a direct consequence from Lemma \ref{abm} (2) and \cite[Lemma 15, Remark 16]{Fin10}:
\begin{lemma} \label{bop}
$\left\| \bar{\mu}(\widehat{H}_t^{(r)}) -{\rm Id} \right\|_{\rm op} \to 0$ uniformly for $t \in [0, T]$.
\end{lemma}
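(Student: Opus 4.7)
The plan is to reduce the statement to a uniform version of the Bergman-kernel asymptotics applied along the family $\{\overline{\phi}_t^{(r)}\}_{t\in[0,T]}$, which by Lemma~\ref{abm}~(2) lies in a $C^\infty$-bounded subset of ${\mathcal H}(L)$. After this reduction, $\bar{\mu}(\widehat{H}_t^{(r)}) - {\rm Id}$ will appear as the Gram matrix of a Hermitian form on $H^0(X, kL)$ whose density with respect to $\widehat{H}_t^{(r)}$ has $C^0$-norm tending to zero, at which point the operator-norm bound is immediate.

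Concretely, I would first pick an $\widehat{H}_t^{(r)}$-orthonormal basis $(s_i)$ of $H^0(X, kL)$, so that $\int_X s_i \overline{s_j}\, e^{-k \overline{\phi}_t^{(r)}}\, {\rm MA}(\overline{\phi}_t^{(r)}) = \delta_{ij}$, and use the defining identity $\sum_i |s_i|^2 = \rho_k(\overline{\phi}_t^{(r)})\, e^{k\overline{\phi}_t^{(r)}}$ for the Bergman function to rewrite the entries of the center of mass as
\[
\bar{\mu}(\widehat{H}_t^{(r)})_{\alpha\beta}
= \int_X s_\alpha \overline{s_\beta}\, e^{-k\overline{\phi}_t^{(r)}}\, \Psi_{k,t}\, {\rm MA}(\overline{\phi}_t^{(r)}),
\qquad
\Psi_{k,t} := \frac{k^n}{\rho_k(\overline{\phi}_t^{(r)})}\cdot \frac{{\rm MA}(\widehat{\phi}_t^{(r)})}{{\rm MA}(\overline{\phi}_t^{(r)})}.
\]
The target $\delta_{\alpha\beta}$ corresponds exactly to $\Psi_{k,t}\equiv 1$, so it suffices to show $\|\Psi_{k,t} - 1\|_{C^0}\to 0$ uniformly in $t$. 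Proposition~\ref{abf} (with $\mu = {\rm MA}$) gives $\rho_k(\overline{\phi}_t^{(r)}) = k^n(1 + O(k^{-1}))$, and Lemma~\ref{abm}~(2)--(3) says that $\overline{\phi}_t^{(r)}$ and $\widehat{\phi}_t^{(r)}$ both converge to $\phi_t$ in $C^\infty$, so ${\rm MA}(\widehat{\phi}_t^{(r)})/{\rm MA}(\overline{\phi}_t^{(r)}) = 1 + o(1)$ in $C^0$. Writing $\Psi_{k,t} = 1 + g_{k,t}$ with $\|g_{k,t}\|_{C^0}\to 0$, the matrix $\bar{\mu}(\widehat{H}_t^{(r)}) - {\rm Id}$ is the Gram matrix in the ONB $(s_i)$ of the Hermitian form $(s,s')\mapsto \int_X s\overline{s'}\, e^{-k\overline{\phi}_t^{(r)}}\, g_{k,t}\, {\rm MA}(\overline{\phi}_t^{(r)})$, whose operator norm on $(H^0(X,kL), \widehat{H}_t^{(r)})$ is bounded by $\|g_{k,t}\|_{C^0}$, yielding the claim.

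The only point requiring care is the uniformity in $t$: the constants in Proposition~\ref{abf} and in the estimate ${\mathcal T}_k(\phi) - \phi = O(k^{-2})$ depend only on the $C^\infty$-bounds of the underlying weight, and Lemma~\ref{abm}~(2) supplies precisely such uniform bounds on $\overline{\phi}_t^{(r)}$ over the compact interval $[0,T]$ (only finitely many perturbations $\eta_i$, $e_i$ appear). I do not expect any substantial obstacle beyond this bookkeeping; the computation above is morally the same as the derivation of \cite[Lemma~15, Remark~16]{Fin10} cited in the statement, specialized to our family.
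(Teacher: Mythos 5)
Your proof is correct, and it is essentially the same approach as the paper: the paper gives no argument beyond citing Lemma~\ref{abm}~(2) together with Fine's Lemma~15 and Remark~16, and your computation is precisely what lies behind that citation. Unpacking it as you do — rewriting $\bar{\mu}(\widehat{H}_t^{(r)})_{\alpha\beta}$ as a Gram matrix of the Hermitian form with density $\Psi_{k,t}$, then using Proposition~\ref{abf} and Lemma~\ref{abm}~(2)--(3) on the $C^\infty$-bounded family $\overline{\phi}_t^{(r)}$ to show $\|\Psi_{k,t}-1\|_{C^0}=O(k^{-1})$ uniformly in $t\in[0,T]$, and finally bounding the operator norm of a Hermitian Gram matrix by the sup of the density — is exactly Fine's argument specialized to this family.
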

Let $H(s)$ ($s \in [0,1]$) be a Bergman geodesic with $H(0)=\widehat{H}_{m/k}^{(r)}$ and $H(1)=H_k^{(m)}$. 
The next lemma shows that the distance $d_k$ controls the operator norm $\|\bar{\mu}(\cdot)\|_{\rm op}$.
\begin{lemma} \label{bon}
If $r>2n$, the operator norm $\|\bar{\mu}(H(s))\|_{\rm op}$ is uniformly bounded for any $(k,m)$ such that $m/k \leq T$ and $s \in [0,1]$.
\end{lemma}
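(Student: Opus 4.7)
The plan is to simultaneously diagonalize $\widehat{H}_{m/k}^{(r)}$ and $H_k^{(m)}$ and reduce the bound on $\bar{\mu}(H(s))$ to that of $\bar{\mu}(H(0))$, which is controlled by Lemma \ref{bop}. Choose a basis $(s_i)$ of $H^0(X,kL)$ which is $\widehat{H}_{m/k}^{(r)}$-orthonormal and such that $(e^{\lambda_i}s_i)$ is $H_k^{(m)}$-orthonormal. In this basis the Bergman geodesic takes the diagonal form $H(s)=\mathrm{diag}(e^{-2s\lambda_i})$, with $H(s)$-orthonormal basis $(e^{s\lambda_i}s_i)$. Lemma \ref{bdf} gives $\sum_i\lambda_i^2=d_k(\widehat{H}_{m/k}^{(r)},H_k^{(m)})^2=O(k^{4n-2r})$, so in particular $\epsilon:=\max_i|\lambda_i|\leq\|\lambda\|_{\ell^2}=O(k^{2n-r})\to 0$ whenever $r>2n$.

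For a unit vector $v$ in the $H(s)$-orthonormal basis, write $\sigma_v=\sum_\alpha v_\alpha e^{s\lambda_\alpha}s_\alpha=\sum_\alpha w_\alpha s_\alpha$ with $w_\alpha:=v_\alpha e^{s\lambda_\alpha}$; then $\|w\|_{\ell^2}^2\leq e^{2\epsilon}$. Setting $\phi_s:=\mathrm{FS}_k(H(s))$ and $\rho_{H(s)}:=\sum_i e^{2s\lambda_i}|s_i|^2=N_k e^{k\phi_s}$ (so that $\rho_{H(s)}\geq e^{-2\epsilon}\rho_{H(0)}$ pointwise),
\[
v^*\bar{\mu}(H(s))v=k^n\int_X\frac{|\sigma_v|^2}{\rho_{H(s)}}\mathrm{MA}(\phi_s)\leq e^{2\epsilon}\,k^n\int_X\frac{|\sigma_v|^2}{\rho_{H(0)}}\cdot\frac{\mathrm{MA}(\phi_s)}{\mathrm{MA}(\phi_0)}\mathrm{MA}(\phi_0).
\]
If the Monge--Amp\`ere ratio satisfies $\mathrm{MA}(\phi_s)/\mathrm{MA}(\phi_0)\leq 1+o(1)$ uniformly on $X$, the right-hand side equals $(1+o(1))w^*\bar{\mu}(\widehat{H}_{m/k}^{(r)})w\leq(1+o(1))\|w\|_{\ell^2}^2\|\bar{\mu}(\widehat{H}_{m/k}^{(r)})\|_{\mathrm{op}}$, since the integral on the right is precisely the quadratic form associated with $\bar{\mu}(\widehat{H}_{m/k}^{(r)})$ in the $H(0)$-orthonormal basis $(s_i)$. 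Invoking Lemma \ref{bop} (which gives $\|\bar{\mu}(\widehat{H}_{m/k}^{(r)})\|_{\mathrm{op}}\to 1$ uniformly in $m/k\in[0,T]$) then yields $\|\bar{\mu}(H(s))\|_{\mathrm{op}}\leq 2$ for all sufficiently large $k$ and uniformly in $s\in[0,1]$, hence a uniform bound for the whole family.

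The main obstacle is thus the Monge--Amp\`ere comparison $\mathrm{MA}(\phi_s)\leq(1+o(1))\mathrm{MA}(\phi_0)$ uniformly on $X$. Writing $\phi_s-\phi_0=k^{-1}\log(1+g_s)$ with $g_s:=\sum_i(e^{2s\lambda_i}-1)|s_i|^2/\rho_{H(0)}$, the pointwise estimate $|g_s|=O(\epsilon)$ is immediate from $|e^{2s\lambda_i}-1|\lesssim\epsilon$, but one needs $C^2$-smallness to control $\partial\bar{\partial}\phi_s$. Since $s_i$ is holomorphic in the local trivialization, $\partial\bar{\partial}|s_i|^2=|\partial s_i|^2$, so applying Cauchy--Schwarz to $\bigl|\sum_i(e^{2s\lambda_i}-1)|\partial s_i|^2\bigr|^2\leq\bigl(\sum|e^{2s\lambda_i}-1|^2\bigr)\bigl(\sum|\partial s_i|^4\bigr)$, together with $\sum|e^{2s\lambda_i}-1|^2=O(\|\lambda\|_{\ell^2}^2)=O(k^{4n-2r})$ and the Bergman-kernel Hessian estimate $\sum|\partial s_i|^4=O(k^{2n+2})$ (a consequence of the $C^l$-uniformity in Proposition \ref{abf}, valid since $\overline{\phi}_t^{(r)}$ lies in a $C^\infty$-bounded family by Lemma \ref{abm}(2)), we obtain $|\partial\bar{\partial}g_s|=O(k^{2n-r+1})$. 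Dividing by $k$ and using $|g_s|=o(1)$ to control the lower-order terms in $\partial\bar{\partial}\log(1+g_s)$, we arrive at $\|\partial\bar{\partial}(\phi_s-\phi_0)\|_{C^0}=O(k^{2n-r})=o(1)$ for $r>2n$. This forces the Monge--Amp\`ere ratio to tend to $1$ uniformly in $x$, $s$ and $(k,m)$ with $m/k\leq T$, closing the argument.
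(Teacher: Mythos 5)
Your proof is essentially correct in outline and arrives at the right conclusion, but it is a genuinely different route from the paper's, and one step is written a bit loosely. The paper's proof of Lemma~\ref{bon} is much softer: it simply combines Lemma~\ref{bop} with Fine's general exponential Lipschitz estimate for the center of mass along Bergman geodesics, $\|\bar{\mu}(H(s))\|_{\rm op}\le \exp\bigl(2\,d_k(H(s),\widehat{H}_{m/k}^{(r)})\bigr)\,\|\bar{\mu}(\widehat{H}_{m/k}^{(r)})\|_{\rm op}$ (\cite[Proposition 24]{Fin10}), together with $d_k(H(s),\widehat{H}_{m/k}^{(r)})\le d_k(H_k^{(m)},\widehat{H}_{m/k}^{(r)})\to 0$ from Lemma~\ref{bdf}. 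That estimate is a linear-algebraic fact about the moment map on ${\mathcal B}_k$ and needs no Monge--Amp\`ere comparison or Bergman-kernel derivative bounds. What you do instead is re-derive a quantitative version of Fine's inequality from scratch by diagonalizing the geodesic, changing to the $H(0)$-orthonormal basis, and controlling the change of volume form $\mathrm{MA}(\phi_s)/\mathrm{MA}(\phi_0)$; this buys a self-contained argument at the cost of the Monge--Amp\`ere step, which is the hard part and is not needed in the paper's approach.

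The one place to be careful is the passage ``$\partial\bar{\partial}|s_i|^2=|\partial s_i|^2$''. Since $g_s=\frac{1}{N_k}\sum_i(e^{2s\lambda_i}-1)|s_i|^2e^{-k\phi_0}$, the quantity whose Hessian you need is $|s_i|^2e^{-k\phi_0}$, not $|s_i|^2$; in Bochner coordinates at a point with $\partial\phi_0=0$ this produces $\bigl(\partial s_i\,\overline{\partial s_i}-k|s_i|^2\,\partial\bar{\partial}\phi_0\bigr)e^{-k\phi_0}$, so there is a second term of order $kN_k\,\omega_{\phi_0}$ beyond your $|\partial s_i|^2$. That term is also controlled by Cauchy--Schwarz with the same $O(k^{3n-r+1})$ bound, so the conclusion $\|\partial\bar{\partial}(\phi_s-\phi_0)\|_{C^0}=O(k^{2n-r})=o(1)$ for $r>2n$ survives; I also note that the estimate $\sum_\alpha|\partial s_\alpha|^2e^{-k\phi_0}=kN_k(\omega_{\phi_0})_{i\bar{i}}$ follows exactly from $\sum_\alpha|s_\alpha|^2e^{-k\phi_0}\equiv N_k$ (the defining identity of $\mathrm{FS}_k$), with no appeal to Proposition~\ref{abf} needed. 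With those amendments your argument closes, but if you prefer to keep the paper's structure, it is cleaner to quote \cite[Proposition 24]{Fin10} directly and avoid the Monge--Amp\`ere comparison entirely.
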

\begin{proof}
By the previous lemma, the operator norm $\left\| \bar{\mu}(\widehat{H}_{m/k}^{(r)}) \right\|_{\rm op}$ is uniformly bounded as long as $m/k \in [0, T]$. Applying \cite[Proposition 24]{Fin10} to our case, we obtain
\begin{eqnarray*}
\|\bar{\mu}(H(s))\|_{\rm op} &\leq& \exp \left( 2 d_k \left( H(s), \widehat{H}_{m/k}^{(r)} \right) \right) \cdot \left\|\bar{\mu}(\widehat{H}_{m/k}^{(r)}) \right\|_{\rm op} \\
&\leq& \exp \left( 2 d_k \left( H_k^{(m)}, \widehat{H}_{m/k}^{(r)} \right) \right) \cdot \left\|\bar{\mu}(\widehat{H}_{m/k}^{(r)}) \right\|_{\rm op}.
\end{eqnarray*}
Hence $\|\bar{\mu}(H(s))\|_{\rm op}$ is bounded as long as $r>2n$ by Lemma \ref{bdf}.
\end{proof}
\subsection{Bounded geometry}
In this section, we review the definitions of $R$-bounded geometry and several related results in \cite[Section 4]{Fin10}. We use the {\it large} K\"ahler metrics in the class $kc_1(L)$ to avoid worrying about powers of $k$. We fix a reference K\"ahler metric $\omega_0 \in c_1(L)$ and denote a large reference K\"ahler metric $\widetilde{\omega}_0:=k\omega_0 \in k c_1(L)$.
\begin{definition}
We say that $\widetilde{\omega} \in k c_1(L)$ has $R$-bounded geometry in $C^l$ if $\widetilde{\omega} > R^{-1} \widetilde{\omega}_0$ and
\[
\|\widetilde{\omega}-\widetilde{\omega}_0\|_{C^l} < R,
\]
where the norm $\| \cdot \|_{C^l}$ is that determined by the metric $\widetilde{\omega}_0$.
\end{definition}
For a family of metrics which has $R$-bounded geometry in $C^l$, we can control the $C^{l-2}$-norm of K\"ahler metrics by means of geometric data in the Bergman space ${\mathcal B}_k$.
\begin{lemma}[Lemma 13, \cite{Fin10}] \label{ccp}
Let $H(s)$ be a smooth path in ${\mathcal B}_k$. If $\widetilde{\omega}(s)= \omega_{k{\rm FS}_k (H(s))}$ for $s \in [0,1]$ have $R$-bounded geometry in $C^l$, and $\|\bar{\mu}(H(s))\|_{\rm op} < K$, then
\[
\|\widetilde{\omega}(0)-\widetilde{\omega}(1)\|_{C^{l-2}} < C K L,
\]
where the constant $C>0$ depends only on $R$ and $l$, but not on $k$, and $L$ is the length of the path $H(s)$ ($s \in [0,1]$).
\end{lemma}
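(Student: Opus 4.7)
The plan is to estimate $\|\widetilde\omega(1)-\widetilde\omega(0)\|_{C^{l-2}}$ by integrating $\tfrac{d}{ds}\widetilde\omega(s)$ along $H(s)$ and showing that the integrand has $C^{l-2}$-norm controlled by the speed $|\dot H(s)|$ times $\|\bar\mu(H(s))\|_{\rm op}$. First I would parametrize the path via the symmetric-space structure on $\mathcal{B}_k$: one writes $\dot H(s) = H(s)A(s)$ for a family of $H(s)$-Hermitian endomorphisms $A(s)$, and under the given Riemannian norm $\|A(s)\|_{\rm HS} = |\dot H(s)|$, so that $L = \int_0^1 \|A(s)\|_{\rm HS}\,ds$.

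Next, I would compute the derivative of the rescaled Fubini-Study potential $k\,{\rm FS}_k(H(s)) = \log \sum_\alpha |s_\alpha(s)|^2$ (for $(s_\alpha(s))$ an $H(s)$-orthonormal basis) along the path. A direct calculation gives
\[
\frac{d}{ds}\widetilde\omega(s) = \frac{\sqrt{-1}}{2\pi}\partial\bar\partial f(s), \qquad f(s)(x) = -\operatorname{tr}\bigl(A(s)\,\mu(H(s))(x)\bigr),
\]
with $\mu(H)$ the matrix-valued pointwise moment map from the excerpt. Because $\partial\bar\partial$ annihilates any function that is constant in $x$, I may subtract from $\mu(H(s))(x)$ any fixed Hermitian matrix without changing $\tfrac{d}{ds}\widetilde\omega(s)$; in particular, replacing $\mu(H(s))(x)$ by $\mu(H(s))(x) - c(s)\,\bar\mu(H(s))$ for a suitable scalar $c(s)$ reduces the problem to estimating the \emph{deviation} of the pointwise matrix from its integrated average.

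The core step is then a uniform (in $k$) pointwise estimate
\[
\|f(s)\|_{C^l} \leq C\,\|A(s)\|_{\rm HS}\cdot\|\bar\mu(H(s))\|_{\rm op},
\]
with $C = C(R,l)$. This uses the $R$-bounded geometry hypothesis together with the asymptotic expansion of the Bergman function (Proposition \ref{abf}) to control, uniformly in $k$, both the pointwise Bergman density $\sum|s_\alpha|^2$ and its derivatives up to order $l$; the two-derivative loss in passing from $f(s)$ to $\partial\bar\partial f(s)$ is absorbed by the $C^{l-2}$ target norm. Integrating over $s$ then yields
\[
\|\widetilde\omega(1)-\widetilde\omega(0)\|_{C^{l-2}} \leq \int_0^1 \|\partial\bar\partial f(s)\|_{C^{l-2}}\,ds \leq C K L.
\]

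The main obstacle I anticipate is the third step: naively pairing an $A(s)$ of Hilbert-Schmidt size with the $N_k \times N_k$ matrix-valued function $\mu(H(s))$ risks a loss of $N_k^{1/2} = O(k^{n/2})$, which would be fatal since one seeks a $k$-independent constant. The argument must therefore exploit the near-balanced structure forced by $R$-bounded geometry (namely that $\mu(H)$ is pointwise close to a scalar multiple of the identity, up to errors controlled by the Bergman expansion) to extract precisely $\|\bar\mu\|_{\rm op}$ rather than a Hilbert-Schmidt or maximum-entry norm. This is exactly why the estimate is phrased in the rescaled large class $kc_1(L)$, where powers of $k$ do not proliferate.
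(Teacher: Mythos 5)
This lemma is quoted directly from Fine's paper (\cite[Lemma~13]{Fin10}); the present paper offers no proof of its own, so there is no internal proof to compare against. What can be said is how your sketch relates to Fine's actual argument.

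Your overall strategy is the right one and is indeed the shape of Fine's proof: differentiate $s \mapsto k\,{\rm FS}_k(H(s))$ along the path, identify the derivative as $-\operatorname{tr}\bigl(A(s)\,\mu(H(s))\bigr)$ with $\dot H = HA$ and $L = \int_0^1 \|A(s)\|_{\rm HS}\,ds$, obtain a pointwise $C^l$-bound on that function (and hence a $C^{l-2}$-bound on the resulting curvature form), and integrate. The observation that one may subtract an $x$-independent matrix from $\mu(H(s))$ without affecting $\partial\bar\partial$ is also correct and is a genuine simplification. You have also correctly diagnosed the danger: a naive Cauchy--Schwarz pairing of a Hilbert--Schmidt $A$ against an $N_k\times N_k$ matrix-valued function threatens to cost a factor $N_k^{1/2}=O(k^{n/2})$, and the work of the lemma is precisely to show this loss does not occur.

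There are two genuine gaps. First, the ``core step''
\[
\|f(s)\|_{C^l} \leq C(R,l)\,\|A(s)\|_{\rm HS}\,\|\bar\mu(H(s))\|_{\rm op}
\]
is where all the substance of Fine's Lemma~13 lives, and you assert it rather than prove it; without it the argument is a reduction, not a proof. Second, the mechanism you propose for proving it --- invoking Proposition~\ref{abf}, i.e.\ the $C^\infty$-uniform Bergman expansion --- does not fit the hypotheses of the lemma. Proposition~\ref{abf} guarantees uniformity only for families of metrics bounded in the $C^\infty$-topology, whereas here the hypothesis is $R$-bounded geometry in $C^l$, a finite-order condition in the \emph{rescaled} class $kc_1(L)$. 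In particular the constant is required to depend only on $(R,l)$ and not on $k$, and this is exactly what the $C^\infty$-expansion cannot supply under such weak regularity. Fine's argument instead rests on elliptic ($L^2$ and Schauder) estimates for holomorphic sections of $kL$ in the rescaled metric, together with the pointwise normalization $\operatorname{tr}\mu(H)(x)=1$ and the relation $\int_X \mu(H)\,k^n{\rm MA}({\rm FS}_k(H)) = k^{-n}\bar\mu(H)$; these give control that depends only on $(R,l)$, with the factor $\|\bar\mu(H)\|_{\rm op}$ entering when one converts between the Hilbert--Schmidt norm on $\mathcal B_k$ and the $L^2$-norm on $X$. To complete a proof along your lines you would need to replace the appeal to Proposition~\ref{abf} with this finite-regularity elliptic machinery.
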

The next lemma is also useful to check the condition for bounded geometry.
\begin{lemma}[Lemma 14, \cite{Fin10}] \label{bcd}
Let $H_k \in {\mathcal B}_k$ be a sequence of metrics such that the corresponding sequence of metrics $\widetilde{\omega}_k:= \omega_{k{\rm FS}_k(H_k)}$ has $R/2$-bounded geometry in $C^{l+2}$ and such that $\|\bar{\mu}(H_k)\|_{\rm op}$ is uniformly bounded. Then there is a constant $C>0$ (which depends only on $R$ and $l$, but not on $k$) such that if $H \in {\mathcal B}_k$ satisfies $d_k(H_k, H) < C$, then the corresponding metric $\widetilde{\omega}:= \omega_{k{\rm FS}_k(H)}$ has $R$-bounded geometry in $C^l$.
\end{lemma}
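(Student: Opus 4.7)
The plan is to run a continuity/bootstrap argument along the Bergman geodesic $H(s)$, $s\in[0,1]$, from $H_k=H(0)$ to $H=H(1)$, applying Lemma \ref{ccp} to its sub-geodesics.

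First I would propagate the operator-norm bound: by the inequality \cite[Proposition 24]{Fin10} already used in Lemma \ref{bon},
\[
\|\bar{\mu}(H(s))\|_{\rm op}\le e^{2d_k(H_k,H(s))}\|\bar{\mu}(H_k)\|_{\rm op},
\]
so once $d_k(H_k,H)<C$ is fixed, there is a uniform constant $K>0$ with $\|\bar{\mu}(H(s))\|_{\rm op}<K$ along the whole geodesic.

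Next, I would set
\[
s^{\star}:=\sup\bigl\{s\in[0,1]:\widetilde{\omega}(t)\text{ has $R$-bounded geometry in $C^{l+2}$ for all $t\in[0,s]$}\bigr\}.
\]
The strict $R/2$-bounded geometry at $s=0$ forces $s^{\star}>0$. For $s<s^{\star}$, Lemma \ref{ccp} with $l+2$ in place of $l$ applied to $H|_{[0,s]}$ yields
\[
\|\widetilde{\omega}(s)-\widetilde{\omega}_k\|_{C^l}\le C_1 K\,d_k(H_k,H)<C_1 K C.
\]
Choosing $C$ so that $C_1 K C<R/4$, and combining with $\|\widetilde{\omega}_k-\widetilde{\omega}_0\|_{C^l}<R/2$, gives $\|\widetilde{\omega}(s)-\widetilde{\omega}_0\|_{C^l}<R$; the positivity $\widetilde{\omega}(s)>R^{-1}\widetilde{\omega}_0$ follows by absorbing the $C^0$-small perturbation into the $2R^{-1}\widetilde{\omega}_0$-buffer coming from the hypothesis. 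Thus $\widetilde{\omega}(s)$ enjoys $R$-bounded geometry in $C^l$ for every $s\le s^{\star}$, which is the conclusion once we show $s^{\star}=1$.

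The main obstacle is closing the bootstrap: Lemma \ref{ccp} only delivers $C^l$-estimates, whereas the definition of $s^{\star}$ requires the stronger $C^{l+2}$-bound. To handle this I would exploit the explicit form of Bergman geodesics, $H(s)=H_k\exp(sA)$ with $A$ Hermitian of Hilbert--Schmidt norm $d_k(H_k,H)$: diagonalising $A$ in an $H_k$-orthonormal basis $(s_i)$ with eigenvalues $(\alpha_i)$,
\[
k\bigl({\rm FS}_k(H(s))-{\rm FS}_k(H_k)\bigr)=\log\frac{\sum_i e^{-s\alpha_i}|s_i|^2}{\sum_i|s_i|^2},
\]
an explicit smooth function on $X$ whose $C^{l+4}$-norm is directly estimated by $d_k(H_k,H(s))$ times uniform constants provided by the Tian--Zelditch asymptotics of Proposition \ref{abf} and the $C^{l+2}$-bounded geometry of $\widetilde{\omega}_k$. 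Applying $k\partial\bar{\partial}$ yields $C^{l+2}$-control of $\widetilde{\omega}(s)-\widetilde{\omega}_k$, closes the bootstrap, and forces $s^{\star}=1$.
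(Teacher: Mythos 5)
The paper does not actually prove this lemma: it is quoted verbatim from \cite{Fin10} (Lemma 14 there) and cited, so there is no ``paper's own proof'' to compare against. What follows is therefore an assessment of your argument on its own merits.

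The first two steps are sound: propagating the operator-norm bound along the Bergman geodesic via
\(\|\bar{\mu}(H(s))\|_{\rm op}\le e^{2d_k(H_k,H(s))}\|\bar{\mu}(H_k)\|_{\rm op}\)
is the same use of \cite[Proposition 24]{Fin10} that the paper makes in Lemma \ref{bon}, and the continuity set \(s^{\star}\) is set up reasonably. You also correctly identify the central difficulty: Lemma \ref{ccp} converts $C^{l+2}$-bounded geometry along the path into only $C^l$-control of the endpoints, so a naive bootstrap at level $C^{l+2}$ does not close.

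However, the fix you propose for this is not valid. You assert that the $C^{l+4}$-norm of
\(\log\bigl(\sum_i e^{-s\alpha_i}|s_i|^2/\sum_i|s_i|^2\bigr)\)
is ``directly estimated by $d_k(H_k,H(s))$ times uniform constants provided by the Tian--Zelditch asymptotics of Proposition \ref{abf} and the $C^{l+2}$-bounded geometry of $\widetilde{\omega}_k$.'' That would amount to \emph{gaining} two derivatives from $C^{l+2}$-bounded geometry, which has no mechanism here (there is no elliptic equation to bootstrap against). Moreover, Proposition \ref{abf} controls only the \emph{unweighted} sum $\sum_i|s_i|^2=\rho_k$; it gives no information on the $\alpha_i$-weighted numerator. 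The quantity to be estimated is $\mathrm{tr}\bigl(\mu(H_k)\,D(s)\bigr)$ with $D(s)=\mathrm{diag}(e^{-s\alpha_i})$, and the pointwise derivative bounds for the matrix-valued function $\mu(H_k)$ are exactly the content of the projective estimates of \cite{Don02} and \cite[Lemma 15, Remark 16]{Fin10} --- which already incur the two-derivative loss you are trying to circumvent. So your argument has not closed the bootstrap; it has only transferred the loss-of-derivatives from Lemma \ref{ccp} into an unproved direct estimate on the Fubini--Study potential. To actually prove this statement one has to follow the argument of \cite{Fin10}, which is structured precisely to account for this loss, rather than try to bypass it via Bergman kernel asymptotics.
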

%=========Section 3===================================================
\section{Proof of Theorem \ref{thm}}
Now we give a proof of Theorem \ref{thm}.
\begin{proof}[Proof of Theorem \ref{thm}]
We use $\omega_{\phi_t}$ as our reference metric. Since we have the $C^{\infty}$-convergence $\widehat{\phi}_{m/k}^{(r)} \to \phi_t$ in the double scaling limit $m/k \to t$ (cf. Lemma \ref{abm} (3)), we know that the metrics $\omega_{\widehat{\phi}_{m/k}^{(r)}}$ has $R/2$-bounded geometry in $C^{l+4}$. If we take $r>2n$, we have $d_k \left( H_k^{(m)}, \widehat{H}_{m/k}^{(r)} \right) \to 0$ (cf. Lemma \ref{bdf}) and $\left\| \bar{\mu}(\widehat{H}_{m/k}^{(r)}) \right\|_{\rm op}$ is uniformly bounded (cf. Lemma \ref{bop}). 
Hence we can apply Lemma \ref{bcd} to our case and find that a family of metrics $\omega_{FS_k(H(s))}$ ($s \in [0,1]$) has $R$-bounded geometry in $C^{l+2}$. Combining with Lemma \ref{bdf}, Lemma \ref{bon} and Lemma \ref{ccp}, we obtain
\[
\left\| k\omega_{\phi_k^{(m)}}-k\omega_{\widehat{\phi}_{m/k}^{(r)}} \right\|_{C^l (k\omega_{\phi_t})} \leq CK \cdot d_k \left( H_k^{(m)}, \widehat{H}_{m/k}^{(r)} \right)
\leq CKM \cdot k^{2n-r},
\]
where the constant $M>0$ depends only on $T$, and $K$ is the uniform upper bound of $\|\bar{\mu}(H(s))\|_{\rm op}$. Rescaling the inequality, we have
\begin{equation} \label{sei}
\left\| \omega_{\phi_k^{(m)}}-\omega_{\widehat{\phi}_{m/k}^{(r)}} \right\|_{C^l(\omega_{\phi_t})} \leq CKM \cdot k^{\frac{l}{2}+2n-r}.
\end{equation}
Hence, if we take $r$ so that $r>\frac{l}{2}+2n$ and take the limit $k \to \infty$, we have the $C^l$-convergence
\[
\left\|\omega_{\phi_k^{(m)}} - \omega_{\phi_t}\right\|_{C^l(\omega_{\phi_t})} \leq \left\|\omega_{\phi_k^{(m)}} - \omega_{\widehat{\phi}_{m/k}^{(r)}} \right\|_{C^l(\omega_{\phi_t})}+ \left\|\omega_{\widehat{\phi}_{m/k}^{(r)}}-\omega_{\phi_t} \right\|_{C^l(\omega_{\phi_t})} \to 0
\]
as desired. Finally, Remark \ref{frm} together with Lemma \ref{abm} (1) implies 
\[
\left\|\omega_{\widehat{\phi}_{m/k}} - \omega_{\phi_t}\right\|_{C^l(\omega_{\phi_t})}=O(k^{-1}).
\]
On the other hand, if we take $r$ so that $r>\frac{l}{2}+2n+\frac{1}{2}$ in \eqref{sei}, we get
\[
\left\| \omega_{\phi_k^{(m)}}-\omega_{\widehat{\phi}_{m/k}^{(r)}} \right\|_{C^l(\omega_{\phi_t})}=O(k^{-1}).
\]
Combining these two estimates, we obtain the speed of convergence
\[
\left\|\omega_{\phi_k^{(m)}} - \omega_{\phi_t}\right\|_{C^l(\omega_{\phi_t})}=O(k^{-1}).
\]
This completes the proof of Theorem \ref{thm}.
\end{proof}
%=========References===================================================

\end{document}